\newtheorem{corollary}{Corollary}
\newtheorem{theorem}{Theorem}
\newtheorem{proposition}{Proposition}
\newtheorem{lemma}{Lemma}
\newcommand{\eps}{\varepsilon}
\newcommand{\opi}{\varpi}
\newcommand{\llambda}{{\bolds{\lambda}}}
\newcommand{\llambdastar}{\llambda^{*}}
\newcommand{\lambdastar}{\lambda^{*}}
\newcommand{\bomega}{\bolds{\omega}}
\newcommand{\Pb}{\mathbf{P}}
\newcommand{\bY}{\mathbf{Y}}
\newcommand{\bL}{\mathbf{L}}
\newcommand{\bZ}{\mathbf{Z}}
\newcommand{\bW}{\mathbf{W}}
\newcommand{\bX}{\mathbf{X}}
\newcommand{\RR}{\mathbb{R}}
\newcommand{\calX}{\mathcal{X}}
\newcommand{\T}{\top}
\newcommand{\1}{\mathbh{1}}
\newcommand{\tr}{\operatorname{Tr}}
\newcommand{\KL}{\mathcal{K}}
\newcommand{\sign}{\operatorname{sgn}}
\newcommand{\sff}{\mathsf{f}}
\newcommand{\Pf}{P_{f}}
\newcommand{\mcZ}{\mathcal{Z}}
\newcommand{\mcF}{\mathcal{F}}
\newcommand{\mcM}{\mathcal{M}}
\newcommand{\mcC}{\mathcal{C}}
\newcommand{\mfF}{\mathfrak{F}}
\newcommand{\mcE}{\mathcal{E}}
\newcommand{\mfL}{\mathfrak{L}}
\newcommand{\mcFL}{\mathcal{F}_{\Lambda}}
\newcommand{\mcX}{\mathcal{X}}
\newcommand{\Ex}{\mathbf{E}}
\newcommand{\mcPLambda}{\mathcal{P}_{\Lambda}}
\newcommand{\flambda}{f_{\llambda}}
\newcommand{\fllambda}{f_{\llambda}}
\newcommand{\lj}{\lambda_{j}}
\renewcommand{\hat}{\widehat}
\begin{document}
\begin{frontmatter}

\title{Mirror averaging with sparsity priors}
\runtitle{Mirror averaging with sparsity priors}

\begin{aug}
\author[1]{\fnms{Arnak S.} \snm{Dalalyan}\corref{}\thanksref{1}\ead[label=e1]{dalalyan@imagine.enpc.fr}} \and
\author[2]{\fnms{Alexandre B.} \snm{Tsybakov}\thanksref{2}\ead[label=e2]{Alexandre.Tsybakov@ensae.fr}}
\runauthor{A.S. Dalalyan and A.B. Tsybakov}
\address[1]{LIGM/IMAGINE, Ecole des Ponts ParisTech, Universit\'{e}
Paris Est, France.\\\printead{e1}}
\address[2]{Laboratoire de Statistique, CREST and LPMA, Universit\'e
Paris 6, France.\\\printead{e2}}
\end{aug}

\received{\smonth{3} \syear{2010}}
\revised{\smonth{11} \syear{2010}}

%
\begin{abstract}
We consider the problem of aggregating the elements of a possibly
infinite dictionary for building a decision procedure that aims at
minimizing a given criterion. Along with the dictionary, an
independent identically distributed training sample is available, on
which the performance of a~given procedure can be tested. In a
fairly general set-up, we establish an oracle inequality for the
Mirror Averaging aggregate with any prior distribution. By choosing
an appropriate prior, we apply this oracle inequality in the context
of prediction under sparsity assumption for the problems of regression
with random design, density estimation and binary classification.
\end{abstract}

%
\begin{keyword}
\kwd{aggregation of estimators}
\kwd{mirror averaging}
\kwd{oracle inequalities}
\kwd{sparsity}
\end{keyword}

\end{frontmatter}
%

\section{Introduction}\label{sec1}

In recent years, several methods of estimation and selection under the
sparsity scenario have been discussed in the literature. The
$\ell_1$-penalized least squares (Lasso) is by far the most studied one
and its statistical properties are now well understood (cf., e.g.,
\cite{BRT09,btw06,btw07a,btw07b,MeinBuhl06,vdg06,ZhangHuang08,zhang09,ZhaoYu06} and
the references cited therein). 
Several other estimators are closely related to the Lasso, such as the
Elastic net \cite{HasZou05}, the Dantzig selector \cite{ct}, the
adaptive Lasso \cite{Zou06}, the least squares with entropy or
$\ell_{1+\delta}$ penalization \cite{k06,Kol08}, etc. These
estimators are obtained as solutions of convex or linear programming
problems and are attractive by their low computational cost. However,
they have good theoretical properties only under rather restrictive
assumptions, such as the mutual coherence assumption~\cite{det06}, the
uniform uncertainty/restricted isometry principle~\cite{ct}, the
irrepresentable \cite{ZhaoYu06} or the restricted eigenvalue
\cite{BRT09} conditions. Roughly speaking, these conditions mean that,
for example, in the linear regression context one should assume that
the Gram matrix of the predictors is not too far from the identity
matrix. Such type of assumption is natural if we want to identify the
parameters or to retrieve the sparsity pattern, but it is not necessary
if we are interested only in the prediction ability.

Indeed, at least in theory, there exist estimators attaining
sufficiently good accuracy of prediction under almost no assumption on
the Gram matrix. This is, in particular, the case for the
$\ell_0$-penalized least squares estimator \cite{btw04}, Theorem 3.6,
\cite{btw07a}, Theorem 3.1. However, in practice this estimator can be
unstable (cf. \cite{Breiman}).
Furthermore, its computation is an NP-hard problem, 
and there is a challenge to find a method realizing a compromise
between the theoretical optimality and computational efficiency.
Motivated by this, we proposed in \cite{dt07,dt08,dt09,dt10} an
approach to estimation under the sparsity scenario, which is quite
different from the $\ell_1$ penalization techniques. The idea is to use
an exponentially weighted aggregate (EWA) with a properly chosen
sparsity-favoring prior. Let us note that there exists an extensive
literature on EWA, which does not discuss the sparsity issue. Thus,
procedures with exponential weighting are quite common in the context
of on-line learning with deterministic data, see
\cite{ccg04,HKW98,v90}, the monograph \cite{lcb06} and the
references cited therein. Statistical properties of various versions of
EWA are discussed in
\cite{Al08,a2,buno05,catbook01,Cat07,gir08,jrt08,jntv05,lb06,y01,y03,Yang04,zhang1}.

On the difference from these works, we focus in
\cite{dt07,dt08,dt09,dt10} on the ability of EWA to deal with the
sparsity issue. Specifically, we prove that EWA with a properly chosen
prior satisfies sparsity oracle inequalities (SOI), which are
comparable with those for the $\ell_0$-penalized techniques and are
even better in some aspects. At the same time, on the difference from
the $\ell_0$-penalized methods, our method is computationally feasible
for relatively large dimensions of the problem, cf. \cite{dt10}.
Furthermore, our estimator has theoretical advantages as compared to
the $\ell_1$-penalized methods, since it satisfies oracle inequalities
with leading constant 1 that hold with almost no assumption on the
dictionary/Gram matrix (cf. detailed comparison with the $\ell_1$ based
methods in Section \ref{secdisc} below).\looseness=-1

The results of \cite{dt07,dt08,dt09,dt10} are established for the
linear regression model with fixed design. The aim of this paper is to
show that similar ideas can be successfully implemented
for a
large scope of statistical problems with i.i.d. data, in particular,
for regression with random design, density estimation and
classification. The procedure that we propose is mirror averaging (MA)
with sparsity priors. The difference from the EWA considered in
\cite{dt07,dt08,dt09,dt10} is that we compute the exponential weights
recursively and then average them out.\looseness=-1

This paper is organized as follows. In Section \ref{sec2}, we introduce
some notation and formulate main assumptions. Section \ref{sec3}
contains the definition of the MA estimator and a~general PAC-Bayesian
risk bound in expectation. In Section \ref{sec4}, we introduce our
sparsity prior and obtain our main SOI as a corollary of the
PAC-Bayesian bound. Sections~\ref{sec5},~\ref{sec6} and
\ref{sec7} consider applications of this result to specific models,
namely, to nonparametric regression with random design, density
estimation and classification. In Section \ref{secdisc}, we briefly
discuss computational aspects of the MA aggregate and compare it to
other methods of sparse estimation. Technical proofs are given in the
\hyperref[appendix]{Appendix}.


\section{Notation and assumptions}\label{sec2}

Let $(\mcZ,\mfF)$ be a measurable space and let $\{\Pf,f\in\mcF\}$ be a collection of probability measures on
$(\mcZ,\mfF)$ indexed by some set $\mcF$. We are interested the
estimation of $f$ based on an i.i.d.  sample $Z_1,\ldots,Z_n$ drawn
from the probability distribution $\Pf$. We will assume that $f$ is a
``functional'' parameter, that is $\mcF$ is a subset of a vector space
$\mcE=\{f\dvtx\mcX\to\RR^d\}$ for some set $\mcX$ and for some
positive integer $d$. From now on, we denote by $\Ex_f$ the expectation
w.r.t.  $\Pf$ and by $\bZ$ the random vector
$(Z_1,\ldots,Z_n)\in\mcZ^n$.

To further specify the settings, let $\ell\dvtx\mcE\times\mcF\to\RR
_+$ be a
general loss function. An estimator of $f$ is any mapping $\tilde
f\dvtx\mcZ^n\to\mcE$ such that the mapping ${\mathbf z}\mapsto\ell
(\tilde
f({\mathbf z}),f)$, defined on $(\mcZ^n,\mfF^{n})$ and with values in
$\RR_+$,
is measurable for every $f\in\mcF$. The performance of an estimator
$\tilde f$ is quantified by the risk
\[
\Ex_f[\ell(\tilde f(\bZ),f)]:=\int_{\mcZ^n} \ell(\tilde
f({\mathbf z}),f)\Pf^{n}(\mathrm{d}\mathbf{z}).
\]
Here $\Pf^{n}$ stands for the product measure $\Pf\otimes\cdots
\otimes
\Pf$ on $(\mcZ^{n},\mfF^{n})$. We will assume the following.

\setcounter{ass}{0}
\renewcommand{\theass}{Q\arabic{ass}}
\begin{ass}\label{assq1}
There exists a mapping $Q\dvtx\mcZ\times\mcE\to
\RR$
such that, for every $f\in\mcF$:
\begin{itemize}
\item[--] the mapping $z\mapsto Q(z,g)$ is measurable and
$\Pf$-integrable for every $g\in\mcE$,
\item[--] $\Delta(f)\triangleq
\int_\mcZ Q(z,g)\Pf(\mathrm{d}z)-\ell(g,f)$ is independent of $g$ and finite
for any $f\in\mcF$.
\end{itemize}
\end{ass}

Assumption \ref{assq1} is fulfilled in a number of settings; detailed discussion
is given in Sections~\ref{sec5}--\ref{sec7}. For example, in the case
of regression with squared loss, one has
$z=(x,y)\in\mcZ=\calX\times\RR$ and $\ell(g,f)=\int_{\mathcal{X}}
(g-f)^2\,\mathrm{d}P_X$, where $P_X$ stands for the distribution of the design and
$f$ is the regression function. Assumption \ref{assq1} is then fulfilled with
$Q(z,g)=(y-g(x))^2$. In simple words, Assumption \ref{assq1} requires the
existence of an unbiased estimator of the risk $\ell(g,f)$, up to a
summand depending exclusively on $f$, where $f$ is the unknown
parameter and $g$ is a known function. It is worth noting that under
Assumption \ref{assq1} the minimizer of the loss function $g\mapsto\ell(g,f)$
coincides with the minimizer of the expectation $g\mapsto\int Q(Z,g)
P_f (\mathrm{d}Z)$.
This property is crucial in what follows. 

Since, in general, there is no estimator having the smallest possible
risk among all possible estimators, we will pursue a more realistic
goal, which consists in finding an estimator whose risk, for every $f$,
is nearly as small as the minimal risk $\min_{g\in\mcFL} \ell(g,f)$
over a prespecified subset $\mcFL$ of $\mcE$, that is, we will follow
the oracle approach.
To make this approach sensible, the subfamily $\mcFL$ should not be too
large. On the other hand, it should be chosen large enough to contain a
good approximation to the (unknown) ``true'' function $f$.

The set $\mcFL$ is indexed by the elements of some measurable space
$(\Lambda,\mfL)$. More precisely, we define
$\mcFL=\{\flambda,\llambda\in\Lambda\}\subset\mcE$ as a
collection of functions (dictionary) such that, for every $x\in\mcX$
and $z\in\mcZ$, the mappings $\llambda\mapsto\flambda(x)$,
$\llambda\mapsto Q(z,\flambda)$ and $\llambda\mapsto\ell(\flambda,f)$
from $\Lambda$ to $\RR$ are measurable. The elements of the dictionary
$\mcFL$ can be interpreted as candidate estimators of $f$. Define
$\mcPLambda$ as the set of all probability measures on $(\Lambda,\mfL)$
and $\mathcal{P}_{1}(\mcFL)$ as the set of all measures $\mu\in\mcPLambda$ such that
$\int_\Lambda|\flambda(x)|\mu(\mathrm{d}\llambda)<\infty$ for every
$x\in\mcX$. We define for every $\mu\in\mathcal{P}_{1}(\mcFL)$,
\[
\sff_\mu=\int_\Lambda\flambda
\mu(\mathrm{d}\llambda) \biggl(\sff_\mu(x)=\int_\Lambda\flambda(x)
\mu(\mathrm{d}\llambda), \forall x\in\mcX\biggr).
\]
We say that $\sff_\mu$ is a convex aggregate of functions $\flambda$
with $\mu$ being the mixing measure or the measure of aggregation. The
estimators we study in the present work are convex aggregates with
data-dependent mixing measures.

In what follows, we denote by $\mcC(\mcFL)$ the set of all convex
aggregates of functions $\flambda$, that is
\[
\mcC(\mcFL)=\{g\dvtx\mcX\to\RR \mbox{ s.t. } g=\sff_\mu\mbox{ for some }\mu\in\mathcal{P}_{1}(\mcFL)\}.
\]
It is clear that $\mcC(\mcFL)$ is a convex set containing $\mcFL$. For
our main result, we need the following condition on the function $Q$
appearing in Assumption \ref{assq1}.

\begin{ass}\label{assq2}
There exist $\beta>0$ and a mapping
$\Psi_\beta\dvtx\mcC(\mcFL)\times\mcC(\mcFL)\to\RR_+$ such that
\begin{longlist}
\item $\Psi_\beta(g,g)=1$ for all $g\in\mcC(\mcFL)$,
\item the
mapping $g\mapsto\Psi_\beta(g,\tilde g)$ is concave on $\mcC(\mcFL)$
for every fixed $\tilde g\in\mcC(\mcFL)$,
\item the inequality
\[
\int_\mcZ\exp\bigl(-\beta^{-1}\{Q(z,g)-Q(z,\tilde
g)\}\bigr)\Pf(\mathrm{d}z)\le\Psi_\beta(g,\tilde g)
\]
holds for every $g,\tilde g\in\mcC(\mcFL)$.
\end{longlist}
\end{ass}

At first sight, this assumption seems cumbersome but we will show that
it holds for a~number of settings which are of central interest in
nonparametric statistics. For example, in the model of regression with
random design and additive Gaussian noise, Assumption~\ref{assq2} is fulfilled
for $\beta\ge2\sigma^2+2\sup_{\lambda}\|f_\llambda-f\|^2_\infty$,
where $\sigma^2$ is the noise variance and~$f$ is the unknown
regression function. Assumption~\ref{assq2} has been first introduced
in \cite{jrt08}, Theorem~4.2 for finite dictionaries and a variant of
it has been used in~\cite{a2}, Corollary~5.1.

Note also that\vspace*{-2pt} if Assumption \ref{assq2} is satisfied for some
$(\beta,\Psi_\beta)$, then it is so for
$(\beta',\Psi_\beta^{\beta/\beta'})$ with any $\beta'>\beta$. In fact,
condition (ii) is ensured due to the concavity of the function $t\mapsto
t^{\beta/\beta'}$ on $[0,\infty)$, while (iii) can be checked using the
H\"older inequality.


\section{Mirror averaging and a PAC-Bayesian bound in
expectation}\label{sec3}

We now introduce the mirror averaging (MA) estimator. First, we fix a
prior $\pi\in\mathcal{P}_{1}(\mcFL)$, a ``temperature'' parameter $\beta>0$, and set
\begin{eqnarray*}
\hat\theta_{m,\llambda}(\bZ)&=& \frac{\exp\{-(1/\beta)
\sum_{i=1}^m Q(Z_i,\flambda)\}} {\int_\Lambda
\exp\{-(1/\beta)\sum_{i=1}^m Q(Z_i,f_w)\} \pi(\mathrm{d} w)},\\
\hat\theta_\llambda&=&\hat\theta_\llambda(\bZ)= \frac1{n+1}\sum_{m=0}^n
\hat\theta_{m,\llambda}(\bZ)
\end{eqnarray*}
with $\hat\theta_{0,\llambda}(\bZ)\equiv1$. For every fixed $\bZ$,
$\hat\theta_\llambda$ is a probability density on $\Lambda$ with
respect to the probability measure $\pi$. Let $\hat\mu_n$ be the
probability measure on $(\Lambda,\mfL)$ having $\hat\theta_\llambda
$ as
density w.r.t. $\pi$. By analogy with the Bayesian context, one can
call $ \hat\theta_\llambda$ and $\hat\mu_n$ the posterior density and
the posterior probability, respectively. Following \cite{jrt08}, where
the case of discrete $\pi$ was considered (see also \cite{l07}), we
define the \textit{MA aggregate}\vadjust{\goodbreak\looseness=-1} as the corresponding posterior mean $\hat
f_n=\sff_{\hat\mu_n}$, that is
$\hat\mu_n(\mathrm{d}\llambda)=\frac1{n+1}\sum_{m=0}^n
\hat\theta_{m,\llambda}(\bZ)\pi(\mathrm{d}\llambda)$ and
%
\begin{equation}\label{f-aggr}
\hat f_n(\bZ,x)=\int_\Lambda\flambda(x)
\hat\theta_\llambda(\bZ)\pi(\mathrm{d}\llambda)
=\frac1{n+1}\sum_{m=0}^n\int_\Lambda\flambda(x)
\hat\theta_{m,\llambda}(\bZ)\pi(\mathrm{d}\llambda).
\end{equation}
To simplify the notation, we suppress the dependence of $\hat f_n$ on
$\bZ$ and $x$ when it causes no ambiguity.

\begin{theorem}[(PAC-Bayesian bound in expectation)]\label{Th1}
If Assumptions \textup{\ref{assq1}} and \textup{\ref{assq2}} are fulfilled, then the MA aggregate $\hat
f_n$ satisfies the following oracle inequality
%
\begin{equation}\label{oracle}
\Ex_f[\ell(\hat f_n,f)]\le\inf_{p\in\mcPLambda}\biggl(\int
_\Lambda
\ell(\flambda,f)p(\mathrm{d}\llambda)+\frac{\beta\KL(p,\pi)}{n+1}\biggr),
\end{equation}
where $\KL(p,\pi)$ stands for the Kullback--Leibler divergence
\[
\KL(p,\pi) =
\cases{\displaystyle
\int_\Lambda\log\biggl(\frac{\mathrm{d}p}{\mathrm{d}\pi}(\llambda)\biggr)
p(d\llambda),&\quad  if $p\ll\pi$,\vspace*{2pt}\cr
+\infty, &\quad  otherwise.
}
\]
\end{theorem}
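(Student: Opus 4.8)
The plan is to exploit the representation $\hat f_n=\frac1{n+1}\sum_{m=0}^n \hat g_m$, where $\hat g_m=\int_\Lambda \flambda\,\hat\theta_{m,\llambda}\,\pi(d\llambda)\in\mcC(\mcFL)$ is the posterior mean at step $m$ (so $\hat g_0=\sff_\pi$), to reduce the claim to a bound on $\frac1{n+1}\sum_m\Ex_f[\ell(\hat g_m,f)]$ by convexity of the loss, to prove a one-step inequality for each $\Ex_f[\ell(\hat g_m,f)]$ in terms of the normalising constants $H_m=\int_\Lambda \exp\{-\frac1\beta\sum_{i=1}^m Q(Z_i,\flambda)\}\,\pi(d\llambda)$ (with $H_0=1$), and then to telescope. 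Since the term $m=n$ will require $Z_{n+1}$, I augment the data by one ghost observation $Z_{n+1}$ drawn independently from $\Pf$; it enters only the analysis, as $\hat f_n$ and every $\hat g_m$, $m\le n$, depend on $Z_1,\dots,Z_n$ alone.

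The heart of the argument is a one-step bound. Fix $m$ and condition on $Z_1,\dots,Z_m$, so that $\hat g_m$ is frozen. Applying Assumption Q2(iii) with $g=\flambda$ and $\tilde g=\hat g_m$, then integrating the resulting inequality against the posterior measure $\hat\mu_m(d\llambda)=\hat\theta_{m,\llambda}\,\pi(d\llambda)$ and using Fubini, the left-hand side becomes $\Ex_f\big[e^{Q(Z_{m+1},\hat g_m)/\beta}\,(H_{m+1}/H_m)\,\big|\,Z_1,\dots,Z_m\big]$ (because $\int e^{-Q(Z_{m+1},\flambda)/\beta}\hat\theta_{m,\llambda}\pi(d\llambda)=H_{m+1}/H_m$), while the right-hand side is $\int_\Lambda \Psi_\beta(\flambda,\hat g_m)\,\hat\mu_m(d\llambda)$. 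By the concavity of $g\mapsto\Psi_\beta(g,\hat g_m)$ (Q2(ii)), Jensen's inequality, and $\sff_{\hat\mu_m}=\hat g_m$, this last integral is at most $\Psi_\beta(\hat g_m,\hat g_m)=1$ (Q2(i)). Hence
\[
\Ex_f\Big[\exp\Big(\tfrac1\beta Q(Z_{m+1},\hat g_m)\Big)\,\frac{H_{m+1}}{H_m}\,\Big|\,Z_1,\dots,Z_m\Big]\le 1 .
\]
Since the quantity inside the expectation equals $\exp\big(\tfrac1\beta[Q(Z_{m+1},\hat g_m)+\beta\log(H_{m+1}/H_m)]\big)$, Jensen's inequality for the convex function $\exp$ gives $\Ex_f[Q(Z_{m+1},\hat g_m)\mid Z_1,\dots,Z_m]\le-\beta\,\Ex_f[\log(H_{m+1}/H_m)\mid Z_1,\dots,Z_m]$. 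Finally, as $Z_{m+1}\sim\Pf$ is independent of $\hat g_m$, Assumption Q1 yields $\Ex_f[Q(Z_{m+1},\hat g_m)\mid Z_1,\dots,Z_m]=\ell(\hat g_m,f)+\Delta(f)$, so after taking full expectations
\[
\Ex_f[\ell(\hat g_m,f)]\le -\beta\,\Ex_f\big[\log H_{m+1}-\log H_m\big]-\Delta(f).
\]

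Summing over $m=0,\dots,n$ telescopes the right-hand side to $-\beta\,\Ex_f[\log H_{n+1}]-(n+1)\Delta(f)$, using $H_0=1$. The Gibbs variational (Donsker--Varadhan) identity gives, for every realisation, $-\beta\log H_{n+1}=\inf_{p\in\mcPLambda}\big(\int_\Lambda \sum_{i=1}^{n+1}Q(Z_i,\flambda)\,p(d\llambda)+\beta\KL(p,\pi)\big)$; taking expectations, passing $\Ex_f$ inside the infimum (since $\Ex[\inf]\le\inf\Ex$), and using Q1 to evaluate $\Ex_f[\sum_{i=1}^{n+1}Q(Z_i,\flambda)]=(n+1)(\ell(\flambda,f)+\Delta(f))$, yields
\[
-\beta\,\Ex_f[\log H_{n+1}]\le \inf_{p\in\mcPLambda}\Big((n+1)\!\int_\Lambda\!\ell(\flambda,f)\,p(d\llambda)+(n+1)\Delta(f)+\beta\KL(p,\pi)\Big).
\]
The two $\Delta(f)$ contributions cancel, and dividing by $n+1$ gives $\frac1{n+1}\sum_{m=0}^n\Ex_f[\ell(\hat g_m,f)]\le\inf_p\big(\int_\Lambda\ell(\flambda,f)p(d\llambda)+\frac{\beta\KL(p,\pi)}{n+1}\big)$. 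Convexity of $g\mapsto\ell(g,f)$ on $\mcC(\mcFL)$ together with Jensen's inequality then gives $\ell(\hat f_n,f)\le\frac1{n+1}\sum_{m=0}^n\ell(\hat g_m,f)$, which upon taking expectations proves \eqref{oracle}.

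The step I expect to be most delicate is the one-step bound, since it is the only place where all three parts of Assumption Q2 are used at once: Q2(iii) produces the exponential moment, while Q2(i)--(ii) are precisely what collapses its average over the posterior to $1$ via Jensen. Two further points require care: the final passage from the averaged posterior means to $\hat f_n$ relies on convexity of $g\mapsto\ell(g,f)$ (which holds in the applications of Sections~\ref{sec5}--\ref{sec7}, where $Q(z,\cdot)$ is convex), and the telescoping uses the ghost observation $Z_{n+1}$, which is legitimate because it affects only the expectations and not the estimator itself.
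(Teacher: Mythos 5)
Your one-step bound, the telescoping of the $\log H_m$'s, and the Donsker--Varadhan step are all correct, and they reproduce in a ``stepwise'' form exactly the computation the paper performs for its term $S_0$. The genuine gap is the very first (and last) move: the reduction of $\Ex_f[\ell(\hat f_n,f)]$ to $\frac1{n+1}\sum_{m=0}^n\Ex_f[\ell(\hat g_m,f)]$ via Jensen requires $g\mapsto\ell(g,f)$ to be convex on $\mcC(\mcFL)$, and this is \emph{not} among the hypotheses of Theorem~\ref{Th1}; Assumptions Q1 and Q2 do not imply it. The paper is explicit on this point: it notes that inequality (\ref{oracle}) for convex $g\mapsto Q(z,g)$ was already discussed informally in \cite{a2}, and that a difference of Theorem~\ref{Th1} from that approach is precisely that convexity of the loss is not required. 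So what you have proved is the convex-loss case, not the stated theorem; when $\ell(\cdot,f)$ is not convex the inequality $\ell(\hat f_n,f)\le\frac1{n+1}\sum_m\ell(\hat g_m,f)$ can simply fail, and nothing in your argument replaces it.

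The way the paper avoids this is to never split the loss across the steps $m$. It writes $\Ex_f[\ell(\hat f_n,f)]=\int_\mcZ\Ex_f[Q(z,\hat f_n)]\,\Pf(dz)-\Delta(f)$ and decomposes $-Q(z,\hat f_n)=S_0(z,\bZ)-S_1(z,\bZ)$ with $S_0=\log\int_\Lambda e^{-Q(z,\flambda)}\,\hat\mu_n(d\llambda)$. The Ces\`aro structure of $\hat\mu_n=\frac1{n+1}\sum_m\hat\mu_m$ is then exploited through the concavity of the \emph{logarithm} (which always holds), producing your telescoping sum inside $S_0$; and the error term $S_1$, which measures the failure of $g\mapsto e^{-Q(z,g)}$ to be concave, is shown to have non-positive expectation by applying Assumption Q2 with $\tilde g=\hat f_n$ --- the same Jensen collapse $\int_\Lambda\Psi_\beta(\flambda,\tilde g)\,\hat\mu(d\llambda)\le\Psi_\beta(\sff_{\hat\mu},\tilde g)$ that you use, but performed once globally at the level of $\hat f_n$ rather than at each step with $\tilde g=\hat g_m$. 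If you replace your opening convexity reduction by this decomposition, the remainder of your argument goes through essentially unchanged and yields the theorem in full generality.
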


Proof of Theorem \ref{Th1} is given in the \hyperref[appendix]{Appendix}. It is based on a
cancellation argument that can be traced back to
Barron \cite{Barron87}.

The oracle inequality of Theorem \ref{Th1} is in the line of the
PAC-Bayesian bounds initiated in \cite{McAllester} and is applicable
to a large variety of models. Some particularly relevant examples will
be treated in Sections \ref{sec5}--\ref{sec7}. An interesting feature
of Theorem \ref{Th1} is that it is valid for a large class of prior
distributions.

The fact that (\ref{oracle}) holds true for convex mappings $g\mapsto
Q(Z,g)$ has been discussed informally in~\cite{a2}, page 1606, as a
consequence of an oracle inequality for
a randomized estimator. 
A difference of Theorem \ref{Th1} from the approach in~\cite{a2} is
that the convexity of the loss function is not required.

\begin{remark}
If the cardinality of $\mcFL$ is finite, say $\operatorname{card} (\mcFL)=N$
and $\Lambda=\{1,\ldots,N\}$, inequality (\ref{oracle}) implies
that
\[
\Ex_f[\ell(\hat f_n,f)]\le\min_{j=1,\ldots,
N}\biggl(\ell(f_j,f)+\frac{\beta\log\pi_j}{n+1}\biggr).
\]
Oracle inequalities of this type and similar under different sets of
assumptions were established earlier by several authors (cf. \cite{catbook01,Cat07,y01,y03,Yang04,buno05,jrt08,a2} and the
references therein for closely related results). Our PAC-Bayesian bound
(\ref{oracle}) generalizes the oracle inequality of \cite{jrt08},
Theorem 3.2, to arbitrary, not necessarily finite, family $\mcFL$. In
the settings that we study below, it is crucial to consider uncountable
$\mcFL$. As we will see later, this generalization allows us to take
advantage of sparsity and suggests a powerful alternative to the
classical model selection approach.
\end{remark}

\begin{remark}
For the regression model with additive noise and deterministic design,
PAC-Bayesian bounds in expectation on the empirical $l_2$-norm similar
to (\ref{oracle}) have been obtained in \cite{dt07,dt08,dt09,dt10} for
an EWA, which does not contain the step of averaging. Earlier
\cite{lb06} proved a similar result for the special case of finite
$\operatorname{card}(\mcFL)$ and Gaussian errors. In the notation of the present
paper, the aggregate studied in those works is of the form
$\check{f_n}=\int_{\Lambda} f_{\llambda} \hat\theta_{n,\llambda
}\pi(\mathrm{d}
\llambda) $. Interestingly, in a very recent paper Lecu\'e and
Mendelson \cite{LecueMend10} proved that $\check{f_n}$ does not
satisfy inequality (\ref{oracle}) in the case of i.i.d. observations.
\end{remark}

Finally, we note that the results of this work hold only for proper
priors. However, it is very likely that Theorem~\ref{Th1} extends to
the case of improper priors under some additional assumption ensuring,
for instance, that the integral $\int_\Lambda\exp\{-\frac
1\beta
\sum_{i=1}^m Q(Z_i,f_w)\} \pi(\mathrm{d} w)$ appearing in the definition of
the MA estimator is finite.

\section{Sparsity oracle inequality}\label{sec4}

In this section, we introduce a prior $\pi$ that we recommend to use
for the MA aggregate under the sparsity scenario. Then we prove a
sparsity oracle inequality
(SOI) leading to some natural choices of the tuning parameters of the
prior.

\subsection{Sparsity prior and SOI}
In what follows, we assume that $\Lambda\subseteq\RR^M$ for some
integer $M\ge2$. We will use bold face letters to denote vectors and,
in particular, the elements of $\Lambda$. We denote by
$\tr(\textsf{A})$ the trace of a square matrix \textsf{A}. To deal with
integrals of the type $\int_\Lambda\ell(\fllambda,f)p(\mathrm{d}\llambda)$,
we introduce the following additional assumption.

{\renewcommand{\theass}{L}\begin{ass}\label{assl}
For every fixed $f\in\mcF$, there exists a
measurable set $\Lambda_0\subset\Lambda$ such that
$\Lambda\setminus\Lambda_0$ has zero Lebesgue measure and the mapping
$L_f \dvtx \Lambda_0\to\RR$, where $L_f(\llambda)= \ell(\fllambda
,f)$, is
twice differentiable. Furthermore, there exists a symmetric $M\times M$
matrix $\mcM$ such that $\mcM-\nabla^2 L_f(\llambda)$ is positive
semi-definite for every $\llambda\in\Lambda_0$, where $\nabla^2
L_f(\llambda)$ stands for the Hessian matrix.
\end{ass}}

We are interested in covering the case of large $M$, possibly much
larger than the sample size $n$. We will be working under the sparsity
assumption, that is, when there exists \mbox{$\llambdastar\in\RR^M$} such that
$f$ is close to $f_{\llambdastar}$ and $\llambdastar$ has a very
small number of nonzero components. We argue that an efficient way for
handling this situation is based on a~suitable
choice of the prior $\pi$. 
To be more precise, our results will show how to take advantage of
sparsity for the purpose of prediction and not for accurate estimation
of the parameters or selection of the sparsity pattern. Thus, if the
underlying model is sparse, we do not prove that our estimated model is
sparse as well, but we claim that it has a small prediction risk under
very mild
assumptions. 
Nevertheless, we have a numerical evidence that our method can also
recover very accurately the true sparsity pattern \cite{dt09,dt10}. We
observed this in examples where the restrictive assumptions mentioned
in the Introduction are satisfied.

Let $\tau$ and $R$ be positive numbers. The {\it sparsity prior} is
defined by
%
\begin{equation}\label{prior}
\pi(d\llambda)=\frac{1}{C_{\tau,R}}\Biggl\{\prod_{j=1}^M
(\tau^2+\lj^2)^{-2}\Biggr\} \1(\|\llambda\|_1\le R)\,\mathrm{d}\llambda,
\end{equation}
where $\|\llambda\|_1=\sum_j |\lj|$ stands for the $\ell_1$-norm,
$\1(\cdot)$ denotes the indicator function, and~$C_{\tau,R}$ is a
normalizing constant such that $\pi$ is a probability density.

The prior (\ref{prior}) has a simple heuristical interpretation. Note
first that $R$ is a regularization parameter, which is typically very
large. So, in a rough approximation we may consider that the factor
$\1(\|\llambda\|_1\le R)$ is almost equal to one. Thus, $\pi$ is
essentially a product of~$M$ rescaled student's distributions.
Precisely, we deal with the distribution of~$\sqrt2\tau\bY$, where~$\bY$ is a random vector with i.i.d. coordinates drawn from
student's $t$ with three degrees of freedom. In the
examples below, we choose a very small $\tau$, smaller than $1/n$.
Therefore, most of the coordinates of $\tau\bY$ are very close to
zero. On the other hand, since student's distribution has heavy tails,
there exists a small portion of coordinates of $\tau\bY$ that are quite
far from zero.

The relevance of heavy tailed priors for dealing with sparsity has been
emphasized by several authors (see \cite{Seeger08}, Section 2.1, and
references therein). Most of this work is focused on logarithmically
concave priors, such as the multivariate Laplace distribution. Also in
wavelet estimation on classes of ``sparse'' functions \cite{JS05} and
\cite{Riv06} invoke quasi-Cauchy and Pareto priors respectively. Bayes
estimators with heavy-tailed priors in sparse Gaussian shift models are
discussed in \cite{AGP07}.

%

We are now in a position to state the SOI for the MA aggregate with the
sparsity prior. The result is even more general because it holds not
only for the MA aggregate but for any estimator satisfying
(\ref{oracle}) with the sparsity prior.

\begin{theorem}\label{ThmSparsity}
Let $\hat f_n$ be any estimator satisfying inequality (\ref{oracle}),
where the loss function~$\ell$ satisfies Assumption \ref{assl} and $\pi$ is the
sparsity prior defined as above. Assume that $\Lambda$ contains the set
$B_1(R)=\{\llambda\in\RR^M \mid  \|\llambda\|_1\le R\}$
with $R
> 2M\tau$. Then for all $\llambdastar$ such that $\|\llambdastar\|
_1\le
R-2M\tau$ we have
%
\begin{equation}\label{SOI11}
\Ex_f[\ell(\hat f_n,f)]\le\ell(f_{\llambdastar},f)+
\frac{4\beta}{n+1}\sum_{j=1}^M \log(1+\tau^{-1}|\lj^*|)+{\sf
R}(M,\tau),
\end{equation}
where the residual term is ${\sf R}(M,\tau)=4\tau^2 \tr(\mcM) +
\frac{\beta}{n+1}$.
\end{theorem}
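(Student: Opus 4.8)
The plan is to invoke Theorem~\ref{Th1} and to bound the infimum in \eqref{oracle} by evaluating its right-hand side at one well-chosen measure $p\in\mcPLambda$. The natural candidate is the sparsity prior recentered at $\llambdastar$ and truncated to an $\ell_1$-ball so that it stays absolutely continuous with respect to $\pi$:
$$
p_{\llambdastar}(d\llambda)=\frac{1}{\tilde C}\Big\{\prod_{j=1}^M\big(\tau^2+(\lj-\lj^*)^2\big)^{-2}\Big\}\,\1\big(\|\llambda-\llambdastar\|_1\le 2M\tau\big)\,d\llambda .
$$
Because $\|\llambdastar\|_1\le R-2M\tau$, the triangle inequality gives $\|\llambda\|_1\le R$ on the support of $p_{\llambdastar}$, so $p_{\llambdastar}\ll\pi$ and the Kullback--Leibler term is finite. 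Setting $\bu=\llambda-\llambdastar$, the crucial structural fact is that under $p_{\llambdastar}$ the vector $\bu$ has a law that is invariant both under permutations of its coordinates and under a sign flip of any single coordinate; this symmetry will kill all the odd terms below.

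I would first dispose of the loss term. Assumption~L makes $t\mapsto L_f(\llambdastar+t\bu)$ twice differentiable with second derivative at most $\bu^\T\mcM\bu$ for a.e.\ $\llambda$, which yields the quadratic majorization $\ell(\flambda,f)=L_f(\llambda)\le L_f(\llambdastar)+\nabla L_f(\llambdastar)^\T\bu+\tfrac12\bu^\T\mcM\bu$. Integrating against $p_{\llambdastar}$, the linear term vanishes by the sign-flip symmetry and the off-diagonal part of $\bu^\T\mcM\bu$ vanishes as well, so that $\int\ell(\flambda,f)\,p_{\llambdastar}(d\llambda)\le\ell(f_{\llambdastar},f)+\tfrac12\,v\,\tr(\mcM)$, where $v=\int u_j^2\,p_{\llambdastar}(d\llambda)$ is the common coordinate second moment. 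For the untruncated density proportional to $(\tau^2+u^2)^{-2}$ one computes this second moment to be exactly $\tau^2$; reinstating the indicator multiplies it by at most $1/p_{\llambdastar}(\|\bu\|_1\le 2M\tau)$, whence $v\le 8\tau^2$ and this term is bounded by $\ell(f_{\llambdastar},f)+4\tau^2\tr(\mcM)$.

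The heart of the proof is the Kullback--Leibler term. On the support of $p_{\llambdastar}$ both truncation indicators equal one, so
$$
\KL(p_{\llambdastar},\pi)=\log\frac{C_{\tau,R}}{\tilde C}+2\sum_{j=1}^M\int\log\frac{\tau^2+\lj^2}{\tau^2+(\lj-\lj^*)^2}\,p_{\llambdastar}(d\llambda) .
$$
Writing $\lj=\lj^*+u_j$ and symmetrizing in $u_j\mapsto-u_j$, the elementary bound $\big(\tau^2+(\lj^*+u_j)^2\big)\big(\tau^2+(\lj^*-u_j)^2\big)\le\big(\tau^2+(\lj^*)^2+u_j^2\big)^2$ combined with $\tau^2+u_j^2\ge\tau^2$ shows that each integral is at most $\log\big(1+(\lj^*)^2/\tau^2\big)\le 2\log(1+\tau^{-1}|\lj^*|)$; summing produces the announced term $\tfrac{4\beta}{n+1}\sum_j\log(1+\tau^{-1}|\lj^*|)$ after multiplication by $\beta/(n+1)$. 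For the normalizers, $C_{\tau,R}$ is at most the integral of $\prod_j(\tau^2+\lj^2)^{-2}$ over all of $\RR^M$, while $\tilde C$ is the integral of the same (recentered) product over the $\ell_1$-ball of radius $2M\tau$; since each marginal of the recentered density has a finite first moment proportional to $\tau$, Markov's inequality shows that this ball carries at least a fixed fraction of the total mass, so $\log(C_{\tau,R}/\tilde C)\le 1$. Substituting the loss and Kullback--Leibler bounds into \eqref{oracle} yields \eqref{SOI11} with residual ${\sf R}(M,\tau)=4\tau^2\tr(\mcM)+\frac{\beta}{n+1}$.

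I expect the single genuine obstacle to be the control of $\log(C_{\tau,R}/\tilde C)$ by a constant that does not grow with the dimension $M$. The $\ell_1$ truncation couples all the coordinates, and a naive coordinatewise (box) truncation would forfeit a factor increasing with $M$; the argument is rescued precisely by the choice of the exponent $-2$ in the prior, which renders the marginals heavy-tailed yet with finite first moment, so that the recentered mass inside the radius-$2M\tau$ ball is bounded below uniformly in $M$ via a first-moment (Markov) estimate. The Taylor majorization and the per-coordinate symmetrization are, by contrast, routine.
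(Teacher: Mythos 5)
Your proof is correct and follows essentially the same route as the paper's: the same recentered, $\ell_1$-truncated prior as test measure, the same Taylor majorization under Assumption~L with the sign-flip/permutation symmetries killing the linear and off-diagonal terms, and the same split of the Kullback--Leibler term into a dimension-free normalizer ratio plus per-coordinate logarithmic terms. The only (immaterial) variations are that you control the mass of the $\ell_1$-ball by a first-moment Markov bound where the paper uses a second-moment Chebyshev bound, and you bound the per-coordinate log ratio by symmetrizing in $u_j\mapsto -u_j$ where the paper uses a direct pointwise inequality.
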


Proof of Theorem \ref{ThmSparsity} is deferred to Section~\ref{sec93}
of the \hyperref[appendix]{Appendix}.

As follows from (\ref{SOI11}), the main term of the excess risk
$\Ex_f[\ell(\hat f_n,f)]-\ell(f_\llambdastar,f)$ is proportional to
$\sum_{j=1}^M \log(1+\tau^{-1}|\lj^*|)$. Importantly, the number of
nonzero elements in this sum is equal to the number of nonzero
components of $\llambdastar$ that we will further denote by
$\|\llambdastar\|_0$. Therefore, for sparse vectors $\llambdastar$ this
term is rather small. But still, in all the examples that we consider
below, it dominates the remainder term ${\sf R}(M,\tau)$, which is made
negligible by choosing a sufficiently small $\tau$, for instance,
$\tau=\mathrm{O}((\tr(\mcM)n)^{-1/2})$.

Theorem~\ref{ThmSparsity} implies the following bound involving only
the $\ell_0$ norm and the upper bound $R$ on the $\ell_1$ norm of
$\llambdastar$.

\begin{corollary}
If some estimator $\hat f_n$ satisfies the oracle inequality of
Theorem \ref{ThmSparsity}, then
\[
\Ex_f[\ell(\hat f_n,f)]\le\ell(f_{\llambdastar},f)+\frac{4\beta
\|\llambdastar\|_0\log(1+R\tau^{-1})}{n+1}+{\sf R}(M,\tau),
\]
where $\llambdastar$ and ${\sf R}(M,\tau)$ are as in
Theorem \ref{ThmSparsity}.
\end{corollary}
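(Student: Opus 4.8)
The plan is to obtain the Corollary as a direct consequence of the sparsity oracle inequality (\ref{SOI11}) in Theorem~\ref{ThmSparsity}; the only work is to majorize the main term $\frac{4\beta}{n+1}\sum_{j=1}^M \log(1+\tau^{-1}|\lj^*|)$ by $\frac{4\beta}{n+1}\|\llambdastar\|_0\log(1+R\tau^{-1})$, after which the claim follows by substitution. First I would observe that a summand $\log(1+\tau^{-1}|\lj^*|)$ equals $\log 1 = 0$ whenever $\lj^*=0$, so that the sum effectively runs only over those indices $j$ for which $\lj^*\neq0$; by the very definition of the $\ell_0$ norm, the number of such indices is exactly $\|\llambdastar\|_0$.

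Next, for each of these nonzero coordinates I would use the crude bound $|\lj^*|\le\|\llambdastar\|_1$, which combined with the standing hypothesis $\|\llambdastar\|_1\le R-2M\tau$ of Theorem~\ref{ThmSparsity} (and $2M\tau>0$, so that $R-2M\tau\le R$) yields $|\lj^*|\le R$ for every $j$. Since the map $t\mapsto\log(1+\tau^{-1}t)$ is increasing on $[0,\infty)$, it follows that $\log(1+\tau^{-1}|\lj^*|)\le\log(1+R\tau^{-1})$ for each nonzero coordinate. Summing over the $\|\llambdastar\|_0$ indices with $\lj^*\neq0$ then gives
$$
\sum_{j=1}^M \log(1+\tau^{-1}|\lj^*|)\le \|\llambdastar\|_0\,\log(1+R\tau^{-1}),
$$
and plugging this into (\ref{SOI11}), while leaving the residual ${\sf R}(M,\tau)=4\tau^2\tr(\mcM)+\frac{\beta}{n+1}$ untouched, delivers the stated inequality.

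I do not anticipate any genuine obstacle: the argument is a one-step majorization resting on two elementary facts, namely that each coordinate of $\llambdastar$ is dominated in absolute value by its $\ell_1$ norm (hence by $R$, thanks to the $\ell_1$ constraint already assumed in the theorem) and that the logarithm is monotone. The only point meriting a word of care is the counting of nonzero terms, which is what converts the sum over all $M$ coordinates into the factor $\|\llambdastar\|_0$ and thereby exhibits the role of sparsity in the bound.
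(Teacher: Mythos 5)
Your proof is correct, and it reaches the stated bound by a slightly different (and more elementary) route than the paper. You bound the sum termwise: each nonzero coordinate satisfies $|\lj^*|\le\|\llambdastar\|_1\le R$, so each of the $\|\llambdastar\|_0$ nonzero summands is at most $\log(1+R\tau^{-1})$ by monotonicity of the logarithm. The paper instead applies Jensen's inequality to the concave function $t\mapsto\log(1+\tau^{-1}t)$, obtaining the intermediate bound
$$
\frac1{M^*}\sum_{j=1}^M\log(1+\tau^{-1}|\lj^*|)\le \log\bigl(1+(\tau M^*)^{-1}\|\llambdastar\|_1\bigr),
$$
with $M^*=\|\llambdastar\|_0$, and then uses $\|\llambdastar\|_1\le R$ and $M^*\ge 1$. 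The Jensen step yields a genuinely sharper intermediate inequality (the argument of the logarithm involves $\|\llambdastar\|_1/M^*$ rather than $R$), which would matter if one wanted to keep the bound in terms of the average magnitude of the nonzero coefficients; but after the final crude majorizations both arguments land on exactly the same displayed inequality, so for the corollary as stated the two proofs are equally good. Your counting of the nonzero terms and your observation that vanishing coordinates contribute $\log 1=0$ are exactly the points that need care, and you handle them correctly.
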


\begin{pf}
Set $M^*=\|\llambdastar\|_0$ for brevity. Using Jensen's inequality, we
get
\[
\frac1{M^*}\sum_{j=1}^M\log(1+\tau^{-1}|\lj^*|)\le\log
\bigl(1+(\tau
M^*)^{-1}\|\llambdastar\|_1\bigr).
\]
Using the inequalities $\|\llambdastar\|_1\le R$ and $M^*\ge1$, the
desired inequality follows.
\end{pf}

Note that the sparsity oracle inequalities (SOI) stated in this section
are valid not only for the MA aggregate but for any other estimator
(whose definition involves a prior $\pi$) satisfying a PAC-Bayesian
bound similar to (\ref{oracle}), possibly with some additional residual
terms that should be then added in the SOI as well. Examples of such
estimators can be found in \cite{a2}.


\begin{remark}\label{AssumpL} Assumption \ref{assl}
need not be satisfied exactly. In fact, $L_f(\cdot)$ need not even be
differentiable. Inspection of the proof of Theorem~\ref{ThmSparsity}
reveals that if $L_f(\llambda)$ is well approximated by a smooth
function $\bar L_f(\llambda)$, that is $0\le L_f(\llambda)-\bar
L_f(\llambda)\le\eps$, $\forall\llambda$, for some small $\eps>0$ and
if $\bar\mcM_\eps- \nabla^2 \bar L_f$ is positive semidefinite, then
the conclusions of Theorem~\ref{ThmSparsity} hold with a modified
residual term
\[
{\sf R}_\eps(M,\tau)=\eps+4\tau^2\tr(\bar\mcM_\eps)+\frac
{\beta}{n+1}.
\]
This remark will be useful for studying the problem of classification
under the hinge loss where the function $L_f$ is not differentiable,
cf. Section \ref{sec7}.
\end{remark}

\subsection{Choice of the tuning parameters}\label{tuning}
The above sparsity oracle inequalities suggest some guidelines for the
choice of tuning parameters $\tau$ and $R$:
\begin{enumerate}
\item Parameter $\tau$ should be chosen very carefully: It should be
small enough to guarantee the negligibility of the residual term but
not exponentially small to prevent the explosion of the main term of
the risk. A reasonable choice (which is not the only possible) for
$\tau$ is
\begin{equation}\label{tau}
\tau=\min\biggl(\frac{\sqrt{\beta}}{\sqrt{Mn}} ,\frac{R}{4M}\biggr) .
\end{equation}
For this choice of $\tau$, we have:
\begin{enumerate}[(a)]
\item[(a)] the residual term ${\sf R}(M,\tau)$ is at most of order
$\beta/n$,
\item[(b)] the terms $\log(1+|\lambda_j^*|/\tau)$ increase at
most logarithmically in $M$ and in $n$ under the condition that
$\tr(\mcM)$ increases not faster than a power of $M$. Note that
$\tr(\mcM)=O(M)$ in all the examples that we consider below.
\item[(c)] the
MA aggregate is accurate enough if there exists a sparse vector
$\llambdastar$, with $\ell_1$-norm bounded by $R/2$ which provides a
good approximation $f_{\llambdastar}$ of $f$.
\end{enumerate}

\item It is clear that one should choose $R$ as large as possible in
order to cover the broadest class of possible values $\llambdastar$.
However, we are not aware of any example where Assumption \ref{assq2} holds with
finite $\beta$ for $R=+\infty$ or, equivalently, for $\Lambda=\RR^M$.
Therefore, we assume that $R$ is an a priori chosen large parameter and
interpret the above results as follows: If there is a sparse vector
$\llambda^*$ such that $\ell(f_{\llambdastar},f)$ is small and
$\|\llambdastar\|_1\le R-2M\tau$, then the MA aggregate has a small
prediction risk.
\end{enumerate}

\begin{remark}
The choice\vspace*{1pt} $\tau=\min(\frac{\sqrt{\beta}}{\sqrt{\tr(\mcM)n}},\frac{R}{4M})$ ensures that the estimator $\hat f_n$ is invariant
with respect to an overall scaling of $\llambda$. More precisely, if
instead of considering the parametrization $\{f_\llambda\dvt
\|\llambda\|_1\le R\}$ we consider the parametrization
$\{\tilde{f_{\bomega}}\dvt \|\bomega\|_1\le R/s\}$ with
$\tilde{f_{\bomega}}=f_{s\bomega}$ for some $s>0$, then the MA
aggregate based on the prior defined by~(\ref{prior}) remains
unchanged. This can be easily checked by the change of variables using
the relation $\tilde\mcM=s^2\mcM$ where $\tilde\mcM$ denotes the
Hessian matrix analogous to $\mcM$ for the dictionary~$\{\tilde{f_{\bomega}}\}$.
\end{remark}

Along with choosing the parameters $(\tau,R)$ of the prior, one needs
to choose the ``temperature'' parameter $\beta$. A model-free choice of
$\beta$ seems to be impossible. In fact, even the existence of $\beta$
such that Assumption \ref{assq2} holds is not ensured for every model. Some more
discussion of the choice of $\beta$ is given in Remark \ref{rem7} below.

\section{Application to regression with random design}\label{sec5}

\subsection{Regression estimation in $L^2$-norm}
Let $\mcZ=\mcX\times\RR$ and we have the i.i.d. observations
$Z_i=(X_i,Y_i)$, $i=1,\ldots,n$ with $X_i\in\mcX$ and $Y_i\in\RR$. We
define the regression function by $f(x)=\Ex(Y_1|X_1=x)$, $\forall
x\in\mcX$, and assume that the errors
\[
\xi_i=Y_i-f(X_i),\qquad  i=1,\ldots,n,
\]
are such that $\Ex[\xi_1^2]<\infty$. Then $\Ex(\xi_i|X_i)=0$. Let $P_X$
denote the distribution of $X_1$. For $s\in[1,\infty]$, we denote by
$\|\cdot\|_{P_X,s}$ the $L^s$-norm with respect to $P_X$. We also
denote by $\langle\cdot,\cdot\rangle_{P_X}$ to\vadjust{\goodbreak} the scalar product in
$L^2(\mcX,P_X)$. Throughout this section, we consider the integrated
squared loss $\ell(f,g)=\|f-g\|_{P_X,2}^2$. Then it is easy to check
that Assumption~\ref{assq1} is fulfilled with
\[
Q(z,g)=\bigl(y-g(x)\bigr)^2,\qquad  z=(x,y)\in\mcZ.
\]
Furthermore, we focus on the particular case where $\mcFL$ is a convex
subset of the vector space spanned by a finite number of measurable
functions $\{\phi_j\}_{j=1,\ldots,M}\subset L^2(\mcX,P_X)$,
that is
%
\begin{equation}\label{classf}
\mcFL=\Biggl\{\fllambda=\sum_{j=1}^M \lj\phi_j \Big| \llambda
\in
\RR^M\mbox{ with }\|\llambda\|_1\le R\Biggr\}
\end{equation}
for some $R>0$. Then Assumption \ref{assl} holds with $\mcM$ being the matrix
with entries $\langle\phi_j,\phi_{j'}\rangle_{P_X}$, which will be
referred to as the Gram matrix. This definition of $\mcM$ will be used
throughout this section. The collection of functions
$\{\phi_1,\dots,\phi_M\}$ will be called the {\it dictionary}.

\begin{remark}\label{rem4}
The value of the parameter $\tau$ presented in (\ref{tau}) does not
allow us to take into account the possible inhomogeneity of functions
$\phi_j$. One way of dealing with the inhomogeneity is to let $\tau$
depend on $j$ in the definition of the sparsity prior $\pi$. In this
paper, we consider for brevity a less general approach, which is common
in the literature on sparsity. Namely, we normalize the functions
$\phi_j$ in advance and use the same $\tau$ for all coordinates of
$\llambda$. The normalization is done by rescaling the functions
$\phi_j$ so that all the diagonal entries of the Gram matrix $\mcM$ are
equal to one.
\end{remark}

Following this remark, we assume that the functions $\phi_j$ are such
that $\|\phi_j\|_{P_X,2}=1$ for every~$j$. Therefore, $\tr(\mcM)=M$.

\begin{proposition}\label{prop1}
Assume that for some constant $L_\phi>0$ we have
$\max_{j=1,\ldots,M}\|\phi_j\|_{P_X,\infty}\le L_\phi$. If, in
addition, the errors $\xi_i$ have a bounded exponential moment:
%
\begin{equation}\label{eq8}
\exists b,\sigma^2>0 \mbox{ such that } \Ex(\mathrm{e}^{t\xi
_1}|X_1)\le
\mathrm{e}^{\sigma^2t^2/2}\qquad \forall|t|\le b,  P_X\mbox{-a.s.},
\end{equation}
then, for every $ \beta\ge
\max(2\sigma^2+2\sup_{\llambda\in\Lambda}\|\fllambda-f\|
_{P_X,\infty}^2,4RL_\phi/b)$,
the MA aggregate $\hat f_n$ defined by (\ref{f-aggr}) with the sparsity
prior (\ref{prior}) satisfies
%
\begin{equation}\label{SOI1}
\Ex_f[\|\hat f_n-f\|_{P_X,2}^2]\le\inf_{\llambdastar}
\Biggl\{\|f_{\llambdastar}-f\|_{P_X,2}^2+\frac{4\beta}{n+1}\sum_{j=1}^M
\log(1+\tau^{-1}|\lj^*|)\Biggr\}+{\sf R}(M,\tau),
\end{equation}
where the $\inf$ is taken over all $\llambdastar$ such that
$\|\llambdastar\|_1\le R-2M\tau$ and ${\sf R}(M,\tau)=4\tau^2
M+\frac{\beta}{n+1}$.
\end{proposition}
\begin{pf*}{Proof of Proposition~\ref{prop1}}
In view of Theorem~\ref{ThmSparsity}, it suffices to check that
Assumption~\ref{assq2} is fulfilled for $\beta\ge
\max(2\sigma^2+2\sup_{\llambda\in\Lambda}\|\fllambda-f\|
_{P_X,\infty}^2,4RL_\phi/b)$.
This is done along the lines of the proof of \cite{jrt08}, Corollary~5.5. We omit the details.
\end{pf*}

Proposition \ref{prop1} can be used in signal denoising under the
sparsity assumption. A typical issue studied in statistical literature,
as well as in the literature on signal processing, is to estimate a
signal $f$ based on its noisy version recorded at some points
$X_1,\ldots,X_n$, under the assumption that $f$ admits a sparse
representation w.r.t. some given dictionary $\{\phi_j ;
j=1,\ldots,M\}$. By sparse representation, we mean a linear
combination of a~small number of functions $\phi_j$. Assume for the
moment that the noise satisfies (\ref{eq8}) with $b=+\infty$ and some
known $\sigma\in[0,\infty)$ and that the unknown signal is bounded by
some constant that can be assumed to be equal to 1. The latter
assumption is fulfilled in many applications, as for example in image
processing.

The method that we suggest for estimating a sparse representation of
$f$, under the assumption $M\ge n$, consists of:
\begin{enumerate}
\item[(a)] normalizing the functions $\phi_j$,
\item[(b)] fixing a
parameter $R>0$,
\item[(c)] setting
%
\begin{equation}\label{paramm}
\beta=2\sigma^2+2(RL_\phi+1)^2,\qquad
\tau=\min\biggl(\frac{\sqrt{\beta}}{\sqrt{\tr(\mcM)n}},\frac{R}{4M}\biggr),
\end{equation}
\item[(d)] computing the MA aggregate $\hat f_n=\sum_{j=1}^M
\hat\lambda_{ j}\phi_j$ with coefficients
$\hat\lambda_{ j}=\int_{\RR^M}\lj\hat\theta_\llambda\pi
(\mathrm{d}\llambda)$
based on the sparsity prior (\ref{prior}) and the posterior density
\[
\hat\theta_\llambda=\frac1{n+1}\sum_{m=0}^{n+1}
\frac{\exp\{-(1/\beta)\sum_{i=1}^m (Y_i-\fllambda
(X_i))^2\}}
{\int_\Lambda\exp\{-(1/\beta)\sum_{i=1}^m
(Y_i-f_{\mathbf{w}}(X_i))^2\}\pi(\mathrm{d}\mathbf{w})} .
\]
\end{enumerate}

In view of Proposition~\ref{prop1}, if we run this procedure with some
value $R>0$, we will get accurate estimates for signals that are well
approximated by a sparse linear combination of functions $\phi_j$,
provided that the coefficients of this linear combination have an
$\ell_1$-norm bounded by $R-2M\tau$. In most of the problems arising in
signal or image processing the $\ell_1$-norm of the best sparse
approximation to the signal is unknown. It is therefore important to
make a data-driven choice of $R$. Let us outline one possible way to do
this. Consider that only the signals formed by a linear combination of
at most $M^*$ functions $\phi_j$ are of interest, and assume that the
dictionary $\{\phi_j\}$ satisfies the restricted isometry
property (RIP) of order $M^*$, see equation (1.3) in \cite{ct} for the
definition. In other terms, assume that $f\approx f_{\llambdastar}$
with $\|\llambdastar\|_0\le M^*$ and $\|f_{\llambdastar}\|_{P_X,2}\ge
\frac12\|\llambdastar\|_2$ where $\|\cdot\|_2$ is the Euclidean norm.
Then we can bound the $\ell_1$-norm of $\llambdastar$ as follows:
\[
\|\llambdastar\|_1\le\sqrt{M^*}\|\llambdastar\|_2\le
2\sqrt{M^*}\|f_{\llambdastar}\|_{P_X,2}\approx
2\sqrt{M^*}\|f\|_{P_X,2}.
\]
We can estimate $\|f\|_{P_X,2}^2$ consistently by $\frac1n\sum_{i=1}^n
(Y_i^2-\sigma^2)$. Based on these estimates, we suggest the following
data-driven choice of $R$:
\[
\hat R=4\Biggl[\frac{\hat M^*}{n}\sum_{i=1}^n
(Y_i^2-\sigma^2)\Biggr]_+^{1/2},
\]
where $x_+=\max(x,0)$ and $\hat M^*$ a prior approximation of the
sparsity index of the signal~$f$.

\begin{remark}
The choice of $\beta$ in (\ref{paramm}) requires the knowledge of
$\sigma^2$, which characterizes the magnitude of the noise. This value
may not be available in practice. Then it is natural to consider
$\beta$ as a tuning parameter and to select it by a data-driven method,
for example, by a suitably adapted version of cross-validation. This
point deserves a special attention and is beyond the scope of the
present paper.\vspace*{-2pt}
\end{remark}

\begin{remark}\label{rem7}
If the distribution $P_X$ of the design is unknown, it is impossible to
normalize the dictionary functions $\phi_j$. In such a situation,
that is, when the functions $\phi_j$ do not necessarily
satisfy $\|\phi_j\|_{P_X,2}=1$, the claim of Proposition~\ref{prop1}
continues to hold true with the modified residual term ${\sf
R}(M,\tau)=4\tau^2\tr(\mcM)+ \frac{\beta}{n+1}$, which can be bounded
by $4\tau^2 M L_\phi^2+ \frac{\beta}{n+1}$. Thus, once again, choosing
$\tau$ as in (\ref{paramm}) makes the residual term ${\sf R}(M,\tau)$
negligible w.r.t. the main terms of the risk bound.\vspace*{-2pt}
\end{remark}

\begin{remark} Proposition~\ref{prop1} is in agreement with the main
principles of the theory of compressive sampling and sparse recovery,
cf., for example,  \cite{Candes06}. Indeed, if the tuning parameters
are well-chosen, the prediction done by $\hat f_n$ can be quite
accurate even if the sample size is relatively small with respect to
the dimension $M$. This happens if the signal admits a $M^*$-sparse
representation in a possibly overcomplete dictionary of cardinality
$M$. Then the number of observations sufficient for an accurate
prediction is of order $M^*$ up to a logarithmic factor.
Proposition~\ref{prop1} is also in perfect agreement with the principle
of incoherent sampling (see, for instance, \cite{Candes06}, page 10).
In fact, in our setting, the incoherence of the sampling is ensured by
the fact that $\phi_j\in L^2(\mcX,P_X)$ satisfy $\|\phi_j\|_{P_X,2}=1$.\vspace*{-2pt}
\end{remark}

Before closing this section, let us mention the recent
work \cite{Gaiffas}, where some interesting results on the aggregation
of estimators in sparse regression are obtained.\vspace*{-2pt}

\subsection{Linear regression with random design}\vspace*{-2pt}\label{Reglin}
Consider now the case
of linear regression. Assume that the i.i.d. observations
$(\bX_i,Y_i)$, $i=1,\ldots,n$, are drawn from the linear model
%
\begin{equation}\label{linmodd}
Y_i=\bX_i^\T\llambdastar+\xi_i,\qquad  i=1,\ldots,n,
\end{equation}
where $\bX_i\in\RR^M$ are i.i.d. covariates and
$\llambdastar\in\RR^M$ is the parameter of interest. Then our method
reduces to estimating $\llambdastar$ by
\[
\hat{\llambda}_n=\frac1{n+1}\sum_{m=0}^{n+1}\int_{\RR^M}\llambda
\hat\theta_{m,\llambda}\pi(\mathrm{d}\llambda),
\]
where $\pi$ is the sparsity prior and
\[
\hat\theta_{m,\llambda}=\frac{\exp\{-\beta^{-1}\sum_{i=1}^m
(Y_i-\bX_i^\T\llambda)^2\}}
{\int_{\RR^M}\exp\{-\beta^{-1}\sum_{i=1}^m (Y_i-\bX_i^\T
\bomega)^2\}\pi(\mathrm{d}\bomega)}.
\]
Then the following result holds.\vadjust{\goodbreak}

\begin{proposition}\label{regprop}
Consider the linear model (\ref{linmodd}) satisfying the above
assumptions. Let the support of the probability distribution of $\bX_1$
be included in $[-1,1]^M$ and $\Ex[\mathrm{e}^{t\xi_1}|\bX_1]\le \mathrm{e}^{\sigma^2
t^2/2}$ for all $t\in\RR$. Set $\Sigma_X=\Ex[\bX_1\bX_1^\T]$.
Then for
any $\beta\ge2\sigma^2+2(R+\|\llambdastar\|_1)^2$ and any~$\llambdastar$ such that $\|\llambdastar\|_1\le R- 2M\tau$ we have
%
\begin{equation}\label{regc}
\Ex[\|\Sigma_X^{1/2}(\hat\llambda_n-\llambdastar)\|_2^2]\le
\frac{\beta}{n+1}\Biggl(1+4\sum_{j=1}^M
\log(1+\tau^{-1}|\lj^*|)\Biggr)+4\tau^2 \tr(\Sigma_X).
\end{equation}
\end{proposition}

This proposition follows directly from Proposition~\ref{prop1} by
setting $\phi_j({\boldsymbol x})=x_j$ if $|x_j|\le1$ and $\phi
_j({\boldsymbol x})=0$ if
$|x_j|>1$, where ${\boldsymbol x}\in\RR^M$ and $x_j$ is its $j$th
coordinate. Note
also that here we have $\mcM=\Sigma_X$.\vspace*{-3pt}


\subsection{Rate optimality}\label{optimality}
\vspace*{-3pt}

In this section, we discuss the optimality of the rates of aggregation
obtained in Proposition~\ref{prop1}. We show that the MA aggregate with
the sparsity prior attains, up to a logarithmic factor, the optimal
rates of aggregation (cf. \cite{tsy03}). Furthermore, $\hat f_n$ is
adaptive in the sense that it simultaneously achieves the optimal rates
for the Model Selection~(MS), Convex~(C) and Linear (L) aggregation. In
what follows, these rates are denoted respectively by $\psi_n^{\mathbf{MS}}(M)$, $\psi_n^{\mathbf{C}}(M)$ and $\psi_n^{\mathbf{L}}(M)$. It is established
in \cite{tsy03} that:
\begin{eqnarray*}
\psi_n^{\mathbf{MS}}(M)&=&n^{-1}{\log M},\\[-2pt]
\psi_n^{\mathbf{C}}(M) &=&n^{-1}\bigl(M\wedge\sqrt{n}\bigr)\log(1+Mn^{-1/2}),\\[-2pt]
\psi_n^{\mathbf{L}}(M) &=&n^{-1}M.
\end{eqnarray*}
We wish to compare the risk of the estimator $\hat f_n$ with the
sparsity prior $\pi$ to the smallest error
$\|f_\llambdastar-f\|_{P_X,2}^2$ where $\llambdastar$ is one of
$\llambda^{\mathbf{MS}}$, $\llambda^{\mathbf{C}}$ or $\llambda^{\mathbf{L}}$ such that
\begin{eqnarray*}
\llambda^{\mathbf{MS}}&=&\operatorname{arg}\min_{\|\llambda\|_0=\|\llambda\|
_1=1} \|f_\llambda-f\|_{P_X,2}^2,\\[-2pt]
\llambda^{\mathbf{C}}&=&\operatorname{arg}\min_{\|\llambda\|_1\le1} \|f_\llambda
-f\|_{P_X,2}^2,\\[-2pt]
\llambda^{\mathbf{L}}&=&\operatorname{arg}\min_{\llambda\in\RR^M}
\|f_\llambda-f\|_{P_X,2}^2.
\end{eqnarray*}
In the next proposition, we denote by $c$ constants which do not depend
on $M$ and $n$.\vspace*{-2pt}
%
\begin{proposition}
Assume that $\hat f_n$ satisfies (\ref{SOI1}) with some $\beta>0$
independent of $M$ and $n$, and that $\log(M)\le c_0 n$ for some
constant $c_0$. If $R>4$ and $\tau$ satisfies (\ref{tau}) with
$\tr(\mcM)=M$, then
\[
\Ex_f[\|\hat f_n-f\|_{P_X,2}^2]\le\|f_{\llambda^{\mathbf{MS}}}-f\|_{P_X,2}^2+c\psi_n^{\mathbf{MS}}(M)\log(1+nM)
\]
and
\[
\Ex_f[\|\hat f_n-f\|_{P_X,2}^2]\le\|f_{\llambda^{\mathbf{C}}}-f\|_{P_X,2}^2+c\psi_n^{\mathbf{C}}(M)\log(1+nM).\vadjust{\goodbreak}
\]
Finally, if $\|\llambda^{\mathbf{L}}\|_1\le R-2M\tau$, then
\[
\Ex_f[\|\hat f_n-f\|_{P_X,2}^2]\le\|f_{\llambda^{\mathbf{L}}}-f\|_{P_X,2}^2+c\psi_n^{\mathbf{L}}(M)\log(1+nM).
\]
\end{proposition}

\begin{pf}
For model selection and linear aggregation, the result follows
immediately from~(\ref{SOI1}) by putting there
$\llambdastar=\llambda^{\mathbf{MS}}$ or $\llambdastar=\llambda^{\mathbf{L}}$ and
using that $\|\llambda^{\mathbf{MS}}\|_0=\|\llambda^{\mathbf{MS}}\|_1=1$. The
case of convex aggregation with $M\le\sqrt{n}$ follows from the bound
for the linear aggregation. The case $M>\sqrt{n}$ requires some
additional arguments, which are presented below.\looseness=-1

Let $s=s_n$ be the integer part of $\sqrt n$, denoted by $[\sqrt n]$.
We assume that $\llambda^{\mathbf{C}}$ has at least $s_n$ nonzero
coordinates, the case $\|\llambda^{\mathbf{C}}\|_0 <[\sqrt{n}]$ being a
trivial consequence of (\ref{SOI1}). Using the Maurey randomization
argument as in \cite{BRT08,RT10}, one can show that
%
\begin{equation}\label{Maurey}
\mathop{\min_{\|\llambda\|_1\le1}}_{\|\llambda\|_0\le s}
\|\fllambda-f\|_{P_X,2}^2\le\|f_{\llambda^{\mathbf{C}}}-f\|_{P_X,2}^2+
\frac{\|\llambda^{\mathbf{C}}\|_1^2}{\min(s,\|\llambda^{\mathbf{C}}\|_0)}
\le\|f_{\llambda^{\mathbf{C}}}-f\|_{P_X,2}^2+\frac{1}{s}.
\end{equation}
Let $\llambda^{s,\mathbf{C}}$ be a point where the minimum on the left-hand
side of (\ref{Maurey}) is attained. Since~$\llambda^{s,\mathbf{C}}$ has not
more than $s$ nonzero coordinates and $\|\llambda^{s,\mathbf{C}}\|_1 \le1$,
we have $\sum_j \log(1+|\lambda^{s,\mathbf{C}}_j/\tau|)\le
s\log(1+\|\llambda^{s,\mathbf{C}}\|_1/\tau)\le s\log(1+\tau^{-1})$. Thus,
applying (\ref{SOI1}) to $\llambda^*=\llambda^{s,\mathbf{C}}$ and using
(\ref{tau}), we get
\begin{equation}\label{13}
\Ex_f[\|\hat f_n-f\|^2_{P_X,2}]\le\|f_{\llambda^{s,\mathbf{C}}}-f\|_{P_X,2}^2
+\frac{cs\log(1+\tau^{-1})}{n},
\end{equation}
where $c$ is some constant independent of $n$ and $M$. Recall now that
$\|f_{\llambda^{s,\mathbf{C}}}-f\|_{P_X,2}^2$ is equal to the left-hand side
of (\ref{Maurey}). This implies
\[
\Ex_f[\|\hat f_n-f\|^2_{P_X,2}]\le\|f_{\llambda^{\mathbf{C}}}-f\|_{P_X,2}^2+\frac{1}{s}+\frac{cs\log(1+\tau^{-1})}{n},
\]
which leads to the desired result due to the choice $s=[\sqrt{n}]$ and
(\ref{tau}).
\end{pf}

\begin{remark}
The theory developed here relies on the fact that the risk is measured
by the expected squared loss. In the case of general $L_p$-loss with
$p\ge1$, a universal procedure for aggregation is proposed in
\cite{Golden} and it is proved that the aggregation in $L_p$ for $p>2$
is more difficult than it is in $L_2$.
\end{remark}

\section{Application to density estimation}\label{sec6}

Let $X_1,\ldots,X_n$ be the observations, which are independent copies
of a random variable $X\dvtx\Omega\to\mcX$ whose distribution has a density
$f$ with respect to some reference measure $\mu$. We consider the
problem of estimating $f$ based on $X_1,\ldots,X_n$. We measure the
risk of an estimator $\tilde f$ of $f$ by the integrated squared error
\[
\ell(\tilde f,f)=\|\tilde f-f\|_{\mu,2}^2=\int_\mcX\bigl(\tilde
f(x)-f(x)\bigr)^2\mu(\mathrm{d}x).
\]
Define the mapping $Q(\cdot,g)\dvtx\mcX\times L^2(\mcX,\mu)\to\RR$ by
\[
Q(x,g)=\|g\|_{\mu,2}^2-2g(x).
\]
It is straightforward that $\Ex_f Q(X,g)-\ell(g,f)=-\|f\|_{\mu,2}^2$
and, therefore, Assumption \ref{assq1} is fulfilled. To further specify the
setting, we consider the family $\mcF_\Lambda$ defined in
(\ref{classf}) where the functions $\phi_j$ are chosen from
$L^2(\mcX,\mu)$ so that $\|\phi_j\|_{\mu,2}=1$ and
$\|\phi_j\|_{\mu,\infty}\le L$, $j=1,\ldots,M$, for some positive
constant $L$. Note that the functions $\phi_j$ need not be integrable
or positive. We have the following result.

\begin{proposition}\label{propdensity}
Let the assumptions given above in this subsection be satisfied and
$\|f\|_{\mu,\infty}\le L$. If $\beta$ is such that
\begin{equation}\label{betaa}
(\beta-2R^2)\mathrm{e}^{-4R(L+\sqrt{L})/\beta}\ge2L+4R L,
\end{equation}
then the MA aggregate $\hat f_n$ based on the sparsity prior
(\ref{prior}) satisfies
%
\begin{equation}\label{SOI1den}
\Ex_f[\|\hat f_n-f\|_{\mu,2}^2]\le\inf_{\llambdastar}
\Biggl\{\|f_{\llambdastar}-f\|_{\mu,2}^2+\frac{4\beta}{n+1}\sum_{j=1}^M
\log(1+\tau^{-1}|\lj^*|)\Biggr\}+{\sf R}(M,\tau),
\end{equation}
where the $\inf$ is taken over all the vectors $\llambdastar$ such that
$\|\llambdastar\|_1\le R-2M\tau$.
\end{proposition}

The proof of this proposition is given in the \hyperref[appendix]{Appendix}. It consists in
checking that Assumptions \ref{assq2} and \ref{assl} are satisfied and then applying
Theorem \ref{ThmSparsity}.
Condition (\ref{betaa}) can be significantly simplified in many
concrete situations. For example, if we assume that $R=1$ or $R=2$,
then one can choose $\beta=12 L$ and $\beta=23L$ respectively, provided
that $L\ge2$.

\section{Classification}\label{sec7}

Assume that we have a sample $(X_1,Y_1),\ldots,(X_n,Y_n)$, where
$X_i\in\mcX$ and $Y_i\in\{-1,+1\}$ are labels. Here $\mcX$ is an
arbitrary measurable space and $(X_i,Y_i)$
are assumed to be generated independently according to a
probability distribution $P$. The goal of binary classification
is to assign a
label $+1$ or $-1$ to a new random point $x$ which is distributed as
$X_i$ and independent of $X_1,\dots,X_n$.

%
\begin{table}[b]
\caption{Common choices of function
$\Phi$; classifiers $f_\Phi$ minimizing the $\Phi$-risk; the
corresponding functions $Q$; constants $\beta_\Phi$ and $C_\Phi$
appearing in Proposition \protect\ref{classif1}}\label{TabPhi}
\begin{tabular*}{\textwidth}{@{\extracolsep{\fill}}llllll@{}}
\hline
Loss & $\Phi(u)$ & $f_\Phi(x)$ & $Q(z,g)$ & $\beta_\Phi$ & $C_\Phi$ \\
\hline
 Squared & $(1+u)^2$ & $\eta(x)$ &
$(1-yg(x))^2$ & $2(1+RL_\Phi)^2$ & 8\\
Truncated squared & $\{(1+u)_+\}^2$ &
$\eta(x)$ & $\{\max(1-yg(x),0)\}^2$ & $2(1+RL_\Phi)^2$ & 8\\ [2pt]
Boosting & $e^u$ & $\frac12\log\frac
{1+\eta(x)}{1-\eta(x)}$ & $e^{-yg(x)}$ & $e^{RL_\Phi}$ &
$4e^{RL_\Phi}$\\ [3pt]
Logit-boosting & $\log(1+e^u)$ & $\log
\frac{1+\eta(x)}{1-\eta(x)}$ & $\log(1+e^{-yg(x)})$ & $e^{RL_\Phi}$ & $4$ \\
Misclassification & $\1(u=1)$ & $\eta(x)$ & $\1
(g(x)\not=y)$ & -- & -- \\
Hinge & $(1-u)_+$ & $\eta(x)$ & $\max
(1-yg(x),0)$ & -- & -- \\
\hline
\end{tabular*}
\end{table}

The problem of interest in classification is to design a classifier
$\hat f\dvtx\mcX\to\RR$ having a small misclassification risk $R[\hat
f]=\int_{\mcX\times\{-1,+1\}}\1(\sign(\hat f(x))\not=y)P(\mathrm{d}x,\mathrm{d}y)$.
Denote by $\eta\dvtx\mcX\to[-1,1]$ the regression function
\[
\eta(x)=\Ex(Y_1|X_1=x)=2\Pb(Y_1=1|X_1=x)-1\qquad \forall x\in
\mcX.
\]
The Bayes classifier is defined as follows:
$f(x)=\1(\eta(x)>0)-\1(\eta(x)\le0)=\sign(\eta(x))$. One easily checks
that
\[
R[\hat f]-R[f]=\int_\mcX\1\bigl(\sign(\hat f(x))\not=
f(x)\bigr)|\eta(x)|P_X(\mathrm{d}x),
\]
where $P_X$ is the distribution of $X_1$. This shows that the Bayes
classifier $f$ minimizes the misclassification risk. Clearly, the Bayes
classifier is not available in practice because of its dependence on
the unknown regression function $\eta(\cdot)$.

This problem is a special case of the general setting of
Section~\ref{sec2} if we take there $Z_i=(X_i,Y_i)$ and
$\ell(g,f)=R[g]-R[f]$. Assumption \ref{assq1} is then fulfilled with
$Q(z,g)=\1(\sign(g(x))=y)$ where $z=(x,y)$. However,
Assumptions \ref{assq2} and \ref{assl} are not satisfied.

\subsection{\texorpdfstring{Classification under smooth $\Phi$-losses}{Classification under smooth Phi-losses}}
An alternative approach is to consider the $\Phi$-risk of classifiers.
For a fixed convex twice differentiable function $\Phi\dvtx\RR\to\RR_+$,
the $\Phi$-risk of a classifier $\hat f$ is defined by
\begin{eqnarray*}
R_\Phi[\hat f]&=&\int_{\mcX\times\{\pm1\}} \Phi(-y\hat
f(x))P(\mathrm{d}x,\mathrm{d}y)\\
&=& \frac12\int_\mcX\bigl\{\Phi(-\hat
f(x))\bigl(1+\eta(x)\bigr)+\Phi(\hat
f(x))\bigl(1-\eta(x)\bigr)\bigr\}P_X(\mathrm{d}x).
\end{eqnarray*}
In this subsection, we are mainly interested in the four common choices
of $\Phi$ presented in the top lines of Table~\ref{TabPhi}. For these
and other loss functions, sharp relations between the $\Phi$-risk and
the misclassification risk of a given classifier $\hat f$ have been
established in \cite{Zhang04,Bartlett06}. In particular, it
is proved in these papers that the minimum of $\Phi$-risk is attained
at any classifier satisfying
\[
f_\Phi(x)\in\operatorname{arg}\min_{u\in\RR}
\bigl\{\Phi(-u)\bigl(1+\eta(x)\bigr)+\Phi(u)\bigl(1-\eta(x)
\bigr)\bigr\}\qquad
\forall x\in\mcX.
\]
Note however that in practice the computation of $f_\Phi$ is impossible
because of its dependence on the unknown $\eta$.

Our aim here is to design a classifier having a $\Phi$-risk which is
nearly as small as the minimal possible $\Phi$-risk. This task can be
recast in a problem of estimation where $f_\Phi$ is the function to be
estimated and the quality of an estimator (classifier) $\hat f$ is
measured by the excess\vadjust{\goodbreak} risk $R_\Phi[\hat f]-R_\Phi[f_\Phi]$. Therefore,
this is a particular case of the setting described in
Section \ref{sec2} with $\ell(g,f)=\ell_\Phi(g,f_\Phi)=
R_\Phi[g]-R_\Phi[f_\Phi]$ and $Q(z,g)=\Phi(-yg(x))$ for every
$z=(x,y)$. Here Assumption \ref{assq1} is obviously satisfied.

In the same spirit as in the previous sections, we assume that we are
given a dictionary $\{\phi_j\}_{j=1,\ldots,M}$ of functions on $\mcX$
with values in $\RR$. The family $\mcFL$ is defined as the set of all
linear combinations of the functions $\phi_j$ with coefficients
$\lambda_1,\ldots,\lambda_M$, such that the vector
$\llambda=(\lambda_1,\ldots,\lambda_M)$ belongs to the $\ell_1$ ball
with radius $R$, cf. (\ref{classf}). The next proposition shows that a
strong sparsity oracle inequality holds for an appropriate choice of
$\beta$.\vspace*{-3pt}

\begin{proposition}\label{classif1}
Assume that for some constant $L_\phi>0$ we have
$\max_{j=1,\ldots,M}\|\phi_j\|_{P_X,\infty}\le L_\phi$. Let the
function $\Phi$ be twice continuously differentiable with\footnote{We
use here the convention $0/0=0$.}
\[
\beta_\Phi:=\sup_{|u|\le RL_\phi} \frac{\Phi'(u)^2}{\Phi
''(u)}<\infty.
\]
Then the MA aggregate defined with $\beta\ge\beta_\Phi$ and with the
sparsity prior (\ref{prior}) satisfies
%
\begin{eqnarray}\label{SOI-classif}
\Ex_f[\ell_\Phi(\hat f_n,f)]&\le&\min_{\|\llambdastar\|_1\le
R-2M\tau}\Biggl(
\ell_\Phi(f_{\llambdastar},f)+\frac{4\beta}{n+1}\sum_{j=1}^M
\log(1+\tau^{-1}|\lambdastar_j|)\Biggr)\nonumber\\ [-8.5pt]\\ [-8.5pt]
&&{}+C_\Phi\tau^2 \sum_{j=1}^M
\|\phi_j\|_{P_X,2}^2+ \frac{\beta}{n+1},\nonumber
\end{eqnarray}
where $C_\Phi=4\max_{|u|\le R L_\phi}\Phi''(u)$.\vspace*{-3pt}
\end{proposition}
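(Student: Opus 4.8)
The plan is to derive Proposition~\ref{classif1} from Theorem~\ref{ThmSparsity} by checking its two hypotheses for the present choice $Q(z,g)=\Phi(-yg(x))$ and $\ell=\ell_\Phi$: namely Assumption Q2, which (through Theorem~\ref{Th1}) secures the oracle inequality (\ref{oracle}) that Theorem~\ref{ThmSparsity} takes as input, and Assumption L. Assumption Q1 has already been recorded. The one geometric fact underlying everything is that every $g\in\mcC(\mcFL)$ obeys $|g(x)|\le RL_\phi$ for $P_X$-almost every $x$: indeed $|f_\llambda(x)|\le\|\llambda\|_1\max_j|\phi_j(x)|\le RL_\phi$, and this bound is stable under convex combinations. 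Hence the arguments $-yg(x)$ stay in $[-RL_\phi,RL_\phi]$, precisely the interval on which $\beta_\Phi$ and $C_\Phi$ are defined.

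For Assumption Q2 I would take $\Psi_\beta(g,\tilde g)=\int_\mcZ\exp(-\beta^{-1}\{\Phi(-yg(x))-\Phi(-y\tilde g(x))\})\,\Pf(dz)$, so that part iii) holds with equality and part i) is immediate since the integrand equals $1$ at $g=\tilde g$. The crux is the concavity in part ii). For fixed $z=(x,y)$ put $G(s)=\exp(-\beta^{-1}\Phi(-ys))$; a direct differentiation gives $G''(s)=\beta^{-1}G(s)\{\beta^{-1}\Phi'(-ys)^2-\Phi''(-ys)\}$, which is $\le 0$ exactly when $\beta\ge\Phi'(-ys)^2/\Phi''(-ys)$. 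Since $-ys$ ranges over $[-RL_\phi,RL_\phi]$, this holds for every $\beta\ge\beta_\Phi$, so $G$ is concave there. Consequently $g\mapsto G(g(x))$, a concave function composed with the affine map $g\mapsto g(x)$, is concave on $\mcC(\mcFL)$; multiplying by the $g$-independent nonnegative factor $\exp(\beta^{-1}\Phi(-y\tilde g(x)))$ (which is bounded since $|\tilde g(x)|\le RL_\phi$) and integrating against $\Pf$ preserves concavity, yielding ii).

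For Assumption L I would compute the Hessian of $L_f(\llambda)=R_\Phi[f_\llambda]-R_\Phi[f_\Phi]$ directly. Differentiating the displayed expression for $R_\Phi$ twice, using $\partial g/\partial\lj=\phi_j$, gives $[\nabla^2 L_f(\llambda)]_{jk}=\int_\mcX w_\llambda(x)\phi_j(x)\phi_k(x)\,P_X(dx)$ with weight $w_\llambda(x)=\tfrac12\{\Phi''(-g(x))(1+\eta(x))+\Phi''(g(x))(1-\eta(x))\}$. Convexity of $\Phi$ and $\eta\in[-1,1]$ force $0\le w_\llambda(x)\le\max_{|u|\le RL_\phi}\Phi''(u)=C_\Phi/4$, whence $v^\T\nabla^2 L_f(\llambda)v\le (C_\Phi/4)\int_\mcX(\sum_j v_j\phi_j)^2\,P_X(dx)$ for all $v$. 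Thus Assumption L holds with $\mcM=(C_\Phi/4)\,\Sigma$, where $\Sigma$ is the Gram matrix $[\langle\phi_j,\phi_k\rangle_{P_X}]$, so $\tr(\mcM)=(C_\Phi/4)\sum_j\|\phi_j\|_{P_X,2}^2$.

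With both assumptions verified and $\Lambda=B_1(R)$ containing itself, Theorem~\ref{ThmSparsity} applies and its residual term $4\tau^2\tr(\mcM)+\beta/(n+1)$ collapses to exactly $C_\Phi\tau^2\sum_j\|\phi_j\|_{P_X,2}^2+\beta/(n+1)$, delivering (\ref{SOI-classif}). The only genuinely delicate point is part ii) of Assumption Q2: one must recognize that the correct $\Psi_\beta$ is the exponential moment itself and that the admissible temperature threshold $\beta_\Phi$ is forced by the sign of $G''$. Everything else — the uniform bound $|g(x)|\le RL_\phi$, the Hessian computation, and the positive-semidefinite domination by the Gram matrix — is routine.
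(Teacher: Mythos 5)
Your proposal is correct and follows essentially the same route as the paper: the same choice of $\Psi_\beta$ with equality in Q2(iii), the same pointwise second-derivative computation reducing concavity to the sign of $\beta^{-1}\Phi'(u)^2-\Phi''(u)$ on $[-RL_\phi,RL_\phi]$, and the same Hessian bound yielding $\mcM=\tfrac14 C_\Phi\,[\langle\phi_j,\phi_k\rangle_{P_X}]$ before invoking Theorem~\ref{ThmSparsity}. The only cosmetic difference is that you organize the concavity step as ``$G$ concave on an interval, composed with an affine evaluation map, then integrated,'' whereas the paper differentiates $t\mapsto\Psi_\beta(tg+(1-t)\bar g,\tilde g)$ directly; the underlying computation is identical.
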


\begin{pf}
We apply Theorems~\ref{Th1} and~\ref{ThmSparsity}. First, we show that
Assumption \ref{assq2} is satisfied. Recall that $Q(z,g)=\Phi(-yg(x))$ and set
$\Psi_\beta(g,\tilde g)=\int_{\mcX\times\{\pm1\}}
\exp(-\beta^{-1}\{Q(z,g)-Q(z,\tilde g)\})P(\mathrm{d}x,\mathrm{d}y)$.
Let us show that for $\beta\ge\beta_\Phi$ the mapping
$g\mapsto\Psi_\beta(g,\tilde g)$ is concave. By standard arguments,
this reduces to proving that the function $t\mapsto
\phi(t)=\Psi_\beta(tg+(1-t)\bar g,\tilde g)$ is concave on $t\in[0,1]$
for every fixed $g,\bar g$ and $\tilde g$. A simple algebra shows that
the second derivative of $\phi$ is nonpositive on $[0,1]$ whenever
$\beta\ge\Phi'(-yg(x))^2/\Phi''(-yg(x))$ for all
$(x,y)\in\mcX\times\{\pm1\}$ and all $g\in\mcFL$. On this set of
$x,y,g$ the value $-yg(x)$ belongs to the interval
$[-RL_\phi,RL_\phi]$. Thus, Assumption \ref{assq2} is satisfied for $\beta\ge
\beta_\Phi$ and Theorem~\ref{Th1} can be applied.

To use Theorem~\ref{ThmSparsity}, it remains to prove that Assumption
\ref{assl}
is satisfied with $\mcM$ being the matrix with entries
$(\frac14C_\Phi\langle\phi_j,\phi_{j'}\rangle)$, where
$j$ and
$j'$ run over $\{1,\ldots,M\}$. From the formula for $R_\Phi[\hat f]$
given at the beginning of this subsection we get
\[
\nabla^2 L_f(\llambda)=\nabla^2 R_\Phi[\fllambda]=\int_{\mcX
\times\{\pm 1\}}
\bigl(\nabla\fllambda(x)\cdot\nabla\fllambda(x)^\T\bigr)\Phi
''(-y\fllambda(x))
P(\mathrm{d}x,\mathrm{d}y).
\]
Since $y\fllambda(x)\in[-RL_\phi,RL_\phi]$ the matrix $\mcM-\nabla^2
L_f(\llambda)$, where $\mcM=\frac14 C_\Phi\int(\nabla\fllambda
\nabla
\fllambda^\T)(x)\*P_X(\mathrm{d}x)$, is positive semi-definite. The desired
result follows now from the linearity in $\llambda$ of $\fllambda(x)$.\vadjust{\goodbreak}
\end{pf}

For the four common choices of $\Phi$ presented in the top lines of
Table \ref{TabPhi} all the conditions of Proposition~\ref{classif1} are
satisfied for a properly chosen constant $\beta$. The minimal values of
$\beta$, as well as the values of the constant $C_\Phi$, for each loss
function $\Phi$ are reported in the last two columns of
Table~\ref{TabPhi}. It is often interesting to use binary classifiers
$\phi_j$ (i.e., functions with values in $\{\pm1\}$), in
which case $L_\phi=1$. Also note that the expressions for
$\beta_{\Phi}$ suggest to choose $R$ not too large, especially in the
case of the boosting and the logit-boosting losses.\looseness=-1

\subsection{Classification under the hinge loss}
One of the key issues in machine learning is classification by support
vector machines. They correspond to a penalized $\Phi$-risk
classification with the loss $\Phi(u)=\Phi_H(u)=\max(1+u,0)$, referred
to as the hinge loss. A notable feature of the hinge loss is that the
classifier $f_{\Phi_H}(x)$ equals $\operatorname{sgn}(\eta(x))$ and therefore
coincides with the Bayes classifier for the misclassification risk.
However, since the hinge loss does not satisfy Assumptions \ref{assq2} and \ref{assl},
Proposition~\ref{classif1} cannot be applied. Furthermore, as shown in
\cite{Lecue07}, no aggregation procedure can attain the fast rate of
aggregation (i.e., the rate $1/n$ up to a logarithmic factor)
when the risk is measured by the hinge loss.

The reason for the failure of Assumption \ref{assl} is that the hinge loss is
not continuously differentiable. One can circumvent this problem by
using the smoothing argument of Remark~\ref{AssumpL}. Indeed, let us
fix $\eps>0$ and introduce the function
$K_\eps(z)=(\sqrt{\eps^2+z^2}-\eps)\1(z>0)$, which is a smooth
approximation to the positive part of $z$. It is easy to see that
$K_\eps(z)\le\max(z,0)\le K_\eps(z)+\eps$ and that
$K_\eps''(z)=\eps^2(\eps^2+z^2)^{-3/2}\in(0,\eps^{-1}]$ for $z>0$. This
allows us to approximate the loss $\ell_{\Phi_H}(g,f)$ by
\[
\ell_\eps(g,f)=\frac12\int_\mcX\bigl\{K_\eps\bigl(1-g(x)\bigr)\bigl(1+\eta(x)\bigr)
+K_\eps\bigl(1+g(x)\bigr)\bigl(1-\eta(x)\bigr)\bigr\}P_X(\mathrm{d}x)-R_{\Phi_H}[f].
\]
Although Assumption \ref{assq2} is not fulfilled, the next proposition shows
that it is possible to adapt the argument of Proposition \ref{classif1}
to the hinge loss $\Phi=\Phi_H$. However, unlike
Proposition \ref{classif1} where the rate of convergence is of the
order $1/n$ (up to a logarithmic factor), the resulting sparsity oracle
inequality has only the rate $1/\sqrt{n}$ (up to a logarithmic factor),
cf. also Remark \ref{Remark9} (1) below. This is the best we can get
for the hinge loss without imposing any condition on $\eta$.

\begin{proposition}\label{classif2}
Let $\Phi_H(u)=\max(1+u,0)$ be the hinge loss and
$\max_{j=1,\ldots,M}\|\phi_j\|_{P_X,\infty}\le L_\phi$ for some
$L_\phi>0$. Then, for every $\beta>0$ the MA aggregate $\hat f_n$ based
on the prior given by (\ref{prior}) satisfies
\begin{eqnarray*}
\Ex_f[\ell_{\Phi_H}(\hat f_n,f)]&\le&\min_{\|\llambdastar\|_1\le
R-2M\tau}\Biggl(
\ell_{\Phi_H}(f_{\llambdastar},f)+\frac{4\beta}{n+1}\sum_{j=1}^M
\log(1+\tau^{-1}|\lambdastar_j|)\Biggr)\\
&&{}+\frac{2(1+RL_\phi
)^2}{\beta}
\mathrm{e}^{(1+RL_\phi)/\beta}+\tilde{\sf R}(M,\tau),
\end{eqnarray*}
where $\tilde{\sf R}(M,\tau)=4\tau
L_\phi\sqrt{M}+\beta(n+1)^{-1}$.\vadjust{\goodbreak}
\end{proposition}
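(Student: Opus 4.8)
The plan is to combine two modifications of the machinery already developed, one for each of the two assumptions that the hinge loss violates. The decisive structural fact is that $\Phi_H$ is convex, so that for every $z=(x,y)$ both $g\mapsto Q(z,g)=\Phi_H(-yg(x))$ and $g\mapsto\ell_{\Phi_H}(g,f)$ are convex; this convexity will play the role that Assumptions~Q2 and~L play in the smooth case. First I would revisit the proof of Theorem~\ref{Th1} and isolate the unique place where Assumption~Q2 intervenes, replacing it by a direct boundedness estimate that exploits convexity; this yields a version of the oracle inequality~(\ref{oracle}) for $\ell_{\Phi_H}$ with one extra, $n$-independent, additive term. Then I would feed this weakened oracle inequality into the argument of Theorem~\ref{ThmSparsity}, repairing the failure of Assumption~L by the smoothing device of Remark~\ref{AssumpL}.

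For the first step, let $\hat p_m$ be the probability measure with density $\hat\theta_{m,\llambda}$ with respect to $\pi$, put $\hat g_m=\int_\Lambda\flambda\,\hat p_m(d\llambda)$, and set $B=1+RL_\phi$, so that $Q(z,\flambda)\in[0,B]$ whenever $\|\llambda\|_1\le R$. The cancellation (free energy) argument behind Theorem~\ref{Th1} is purely algebraic and never uses Q2; the only point at which Q2 enters is to bound, for a fresh $Z\sim\Pf$ independent of the sample defining $\hat p_m$, the one-step aggregated Laplace transform
$$
A_m=\Ex_f\int_\Lambda\exp\big(-\beta^{-1}\{Q(Z,\flambda)-Q(Z,\hat g_m)\}\big)\,\hat p_m(d\llambda)
$$
by $1$. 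I would instead bound $A_m$ directly. By convexity of $g\mapsto Q(z,g)$ and Jensen's inequality, $Q(z,\hat g_m)\le\int_\Lambda Q(z,f_{\bomega})\,\hat p_m(d\bomega)$; substituting this and centering the exponent gives
$$
A_m\le\Ex_f\int_\Lambda\exp\Big(-\beta^{-1}\big\{Q(Z,\flambda)-\textstyle\int_\Lambda Q(Z,f_{\bomega})\,\hat p_m(d\bomega)\big\}\Big)\,\hat p_m(d\llambda).
$$
For each fixed $Z$ the inner integral is the average of the exponential of a $\hat p_m$-centered variable bounded by $B$ in absolute value, so the elementary inequality $e^{-t}-1+t\le t^2e^{|t|}$ (applied with $t$ ranging in $[-B/\beta,B/\beta]$) gives the inner integral $\le 1+2B^2\beta^{-2}e^{B/\beta}$ uniformly in $Z$, hence $A_m\le 1+2B^2\beta^{-2}e^{B/\beta}$. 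Since $\log(1+u)\le u$, the per-step surplus $\beta\log A_m$ is at most $2B^2\beta^{-1}e^{B/\beta}$; the rest of the proof of Theorem~\ref{Th1} is unchanged, and together with the convexity bound $\ell_{\Phi_H}(\hat f_n,f)\le\frac1{n+1}\sum_{m=0}^n\ell_{\Phi_H}(\hat g_m,f)$ this produces (\ref{oracle}) for $\ell_{\Phi_H}$ with the extra term $2B^2\beta^{-1}e^{B/\beta}$, which does not decay in $n$.

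For the second step, Assumption~L fails only through the lack of twice-differentiability of $L_f(\llambda)=\ell_{\Phi_H}(\flambda,f)$, and I would apply Remark~\ref{AssumpL} with the smooth surrogate $\bar L_f(\llambda)=\ell_\eps(\flambda,f)$. The inequality $K_\eps(z)\le\max(z,0)\le K_\eps(z)+\eps$ gives $0\le L_f(\llambda)-\bar L_f(\llambda)\le\eps$, while differentiating $\ell_\eps$ twice in $\llambda$ yields
$$
\nabla^2\bar L_f(\llambda)=\tfrac12\int_\mcX\big\{K_\eps''(1-\flambda(x))(1+\eta(x))+K_\eps''(1+\flambda(x))(1-\eta(x))\big\}\,\nabla\fllambda(x)\nabla\fllambda(x)^\T\,P_X(dx).
$$
Since $0<K_\eps''\le\eps^{-1}$ and $1\pm\eta\le2$, the matrix $\bar\mcM_\eps-\nabla^2\bar L_f(\llambda)$ is positive semidefinite with $\bar\mcM_\eps=\eps^{-1}\mcM$, where $\mcM$ is the Gram matrix with entries $\langle\phi_j,\phi_{j'}\rangle_{P_X}$; hence $\tr(\bar\mcM_\eps)\le\eps^{-1}ML_\phi^2$. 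Remark~\ref{AssumpL} then replaces the residual of Theorem~\ref{ThmSparsity} by $\eps+4\tau^2\tr(\bar\mcM_\eps)+\beta(n+1)^{-1}$.

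Finally I would run the proof of Theorem~\ref{ThmSparsity} with the weakened oracle inequality of the first step and the approximate Assumption~L of the second, obtaining the announced main term together with the residual $\eps+4\tau^2\eps^{-1}ML_\phi^2+\beta(n+1)^{-1}+2B^2\beta^{-1}e^{B/\beta}$. Choosing $\eps=2\tau L_\phi\sqrt M$ minimizes $\eps+4\tau^2\eps^{-1}ML_\phi^2$ and collapses it to $4\tau L_\phi\sqrt M$, which is exactly $\tilde{\sf R}(M,\tau)=4\tau L_\phi\sqrt M+\beta(n+1)^{-1}$ plus the surviving $2B^2\beta^{-1}e^{B/\beta}=2(1+RL_\phi)^2\beta^{-1}e^{(1+RL_\phi)/\beta}$, as claimed. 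I expect the first step to be the genuine obstacle: one must confirm that nonnegativity and convexity of $\Phi_H$ alone suffice to push the cancellation argument through with only a controlled additive perturbation, and in particular that the exponential-moment estimate delivers a perturbation of order $B^2/\beta$ (up to the $e^{B/\beta}$ factor). It is precisely this $n$-free $B^2/\beta$ scaling, which can only be made small by taking $\beta$ large at the price of inflating the $\beta/(n+1)$ terms, that forces the $1/\sqrt n$ rate upon balancing $\beta\sim\sqrt n$, in agreement with the impossibility result recalled before the proposition.
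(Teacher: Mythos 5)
Your proposal is correct and follows essentially the same route as the paper: the paper likewise returns to the intermediate inequality (\ref{eq:5}) in the proof of Theorem~\ref{Th1} (the only place Assumption Q2 is used), bounds the extra $S_1$ term by a second-order Taylor expansion of the exponential in which convexity of the hinge loss and Jensen's inequality kill the linear term and boundedness $Q\in[0,1+RL_\phi]$ yields the $n$-free surplus $2(1+RL_\phi)^2\beta^{-1}e^{(1+RL_\phi)/\beta}$, and then invokes Remark~\ref{AssumpL} with a $K_\eps$-smoothed loss whose Hessian trace is at most $L_\phi^2M\eps^{-1}$, optimizing $\eps$ to get $4\tau L_\phi\sqrt M$. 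Your per-step bookkeeping of the Laplace transform and your choice of $\ell_\eps$ as the smooth surrogate are only cosmetic variations on the paper's argument.
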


The proof of this proposition is given in the \hyperref[appendix]{Appendix}.

\begin{remark}\label{Remark9}
\begin{enumerate}
%
\item Consider the sparsity scenario, that is, assume that for
some vector $\llambda^*$ having at most $M^*$ nonzero coordinates, the
excess risk $\ell_{\Phi_H}(f_{\llambdastar},f)$ is small and
$\|\llambda^*\|_1\le R/2$. Proposition~\ref{classif2} with the choice
of $\beta=(1+RL_\phi)\sqrt{n/M^*}$ and
$\tau=\min(\frac1{\sqrt{nM}}, \frac{R}{4M})$ leads to the
sparsity oracle inequality
\[
\hspace*{-5pt}\Ex_f[\ell_{\Phi_H}(\hat f_n,f)]\le\!\mathop{\min_{\|\llambdastar\|_1\le R/2}}_{\|\llambdastar\|_0\le M^*
}
\biggl(\!\ell_{\Phi_H}(f_{\llambdastar},f)+\frac{(1+RL_\phi)\sqrt
{M^*}}{\sqrt{n}}
\{C+4\log(1+\tau^{-1}\|\llambdastar\|_1)\}\!\biggr),
\]
where $C>0$ is a constant independent on $M$, $M^*$ and $n$ if $M^*\le
n$. This result is valid for arbitrary $\eta$. It should be noted that
the MA aggregate $\hat f_n$ satisfying this SOI depends on the upper
bound $M^*$ on the sparsity level, which is not always available in
practice. Constructing a classifier independent of $M^*$ and satisfying
the above SOI is an interesting open problem.

\item An important special case is a dictionary composed from a large
number of simple binary classifiers $\phi_j\dvtx\mcX\to\{\pm1\}$. If we
choose $R=1$, all aggregates $\fllambda$ with $\|\llambda\|_1\le R$, as
well as their mixtures, take values in $[-1,1]$, and therefore the
function $Q(z,\fllambda)$ associated with the hinge loss is linear in
$\llambda$. This property has two important consequences. The first one
is that Assumption \ref{assl} holds with $\mcM=0$ and it is no longer necessary
to smooth out the function $L_f(\llambda)$ and to use
Remark~\ref{AssumpL} in the proof Proposition~\ref{classif2}. Thus, the
residual term $\tilde{\sf R}$ is equal to $\beta(n+1)^{-1}$. The second
consequence is computational, related to the Langevin Monte-Carlo
approximation of the MA aggregate briefly described in
Section~\ref{ss-comp} below. Namely, in this case we have strong mixing
properties that are independent of the ambient dimension $M$, due to
the independence of the coordinates of the Langevin diffusion.

\item According to \cite{Lecue07}, if the underlying distribution $P$
satisfies the margin assumption of \cite{tsy04}, then the rate of
aggregation can be substantially improved. It would be interesting to
investigate whether this property extends to the sparsity scenario. It
is likely that one of the randomized procedures of~\cite{a2} used in
conjunction with our sparsity prior can yield an aggregation rate
optimal classifier.
\end{enumerate}
\end{remark}

\section{Discussion}\label{secdisc}

\subsection{Comparison with other methods of sparse estimation}
In this paper, we have proved sparsity oracle inequalities (SOI) in a
setting, which is important but not much studied in the literature on
sparsity. We considered the i.i.d. random sampling and we measured
the
quality of estimation/prediction by the average loss with respect to
the distribution of $Z=(X,Y)$, namely, our main example was the loss
$\ell(g,f)=\int_\mcZ Q(z,g)\Pf(\mathrm{d}z)$. Most of the literature on sparse
estimation is focused on the high-dimensional linear regression model
with fixed design, so the data are not i.i.d. and the empirical
prediction loss, rather than the average loss is considered. Notable
exceptions are the papers \cite{btw07b,k06,Kol08,kol09,vdg06} where
the framework is similar to ours. Among these, \cite{kol09} focuses on
regression with random design and study the Dantzig selector, while
\cite{btw07b,k06,Kol08,vdg06} analyze the penalized estimators of the
form
\[
\hat\llambda_n= \operatorname{arg} \min_{\llambda\in\Lambda}\Biggl(
\frac1{n}\sum_{i=1}^n Q(Z_i, \flambda) + \mathrm{Pen}(\llambda)\Biggr),
\]
where $\operatorname{Pen}(\llambda)$ is a penalty, which is equal or close to
the $\ell_1$-penalty $r\|\llambda\|_1$ with a suitable regularization
parameter $r>0$. For the penalized estimator $\tilde f_n=f_{\hat
\llambda_n}$, they prove SOI of the form (here we give a ``generic''
simplified version based on \cite{k06}):
%
\begin{equation}\label{koltch}
\ell(\tilde f_n,f)\le\mathop{\min_{\|\llambdastar\|_1\le R}}_{\|\llambdastar\|_0\le M^*
}
\biggl(
3\ell(f_{\llambdastar},f)+\frac{C(1+R^2)M^*}{n\kappa
_{n,M}}{\mathcal
L}_{n,M} \biggr)
\end{equation}
with a probability close to 1, where $C>0$ is a constant independent of
$n$ and $M$, ${\mathcal L}_{n,M}$ is a~factor, which is logarithmic in
$n$ and $M$, and $\kappa_{n,M}$ is minimal sparse eigenvalue appearing
in the conditions on the Gram matrix of the dictionary quoted in the
\hyperref[sec1]{Introduction}. With the same notation, a ``generic'' version of our SOI
for the MA aggregate $\hat f_n$ is the following:
%
\begin{equation}\label{our}
\Ex[\ell(\hat f_n,f)]\le\mathop{\min_{\|\llambdastar\|_1\le R}}_{\|\llambdastar\|_0\le M^*
}
\biggl( \ell(f_{\llambdastar},f)+\frac{C(1+R^2)M^*}{n}{\mathcal L}_{n,M}
\biggr).
\end{equation}
There are two advantages of (\ref{our}) with respect to (\ref{koltch}).
First, (\ref{our}) is a sharp oracle inequality, since the leading
constant is 1, whereas this is not the case in (\ref{koltch}). Second
and most important, (\ref{our}) holds under mild assumptions on the
dictionary, such as the boundedness of the functions $\phi_j$ in some
norm, whereas (\ref{koltch}) requires restrictive assumptions on
minimal sparse eigenvalue $\kappa_{n,M}$ which can be very small and
appears in the denominator. In particular, (\ref{our}) is applicable
when $\kappa_{n,M}=0$. Finally, we note that (\ref{koltch}) is an
oracle inequality ``in probability'' while (\ref{our}) is ``in
expectation.'' Inequalities in expectation can be derived from the
inequalities in probability of the form (\ref{koltch}) obtained in
\cite{btw07b,k06,Kol08,vdg06} only under some additional assumptions.
So, strictly speaking, even more assumptions should be imposed in the
case of (\ref{koltch}) to make possible direct comparison with
(\ref{our}).

In conclusion, we see that the oracle bounds for $\ell_1$-penalized
methods, such as the Lasso or its modifications can be quite inaccurate
as compared to the those that we obtain for the MA aggregate.

The $\ell_0$-penalized methods for models with i.i.d. data are less
studied. To our knowledge, this is done only for regression with random
design \cite{btw04} and for density estimation \cite{klemela}. The
oracle inequalities in those papers are less accurate than the ours
since the leading constant there is greater than 1. Moreover, if we
want to make it closer to 1, the remainder term of the oracle
inequalities explodes.

Furthermore, as mentioned above, our sparsity oracle inequalities are
potentially applicable not only for the MA aggregate, but for any
estimator associated to prior distribution~$\pi$ and satisfying a
PAC-Bayesian bound in expectation as in Theorem \ref{Th1}.

\subsection{Computational aspects}\label{ss-comp}
If the dimension $M$ is large the computation of the MA aggregate with
the sparsity prior becomes a hard problem. Indeed, its definition
contains integrals over a simplex in $\RR^M$. Nevertheless, accurate
approximations can be realized by a numerically efficient algorithm
based on Langevin Monte Carlo. This algorithm along with the
convergence and simulation studies is discussed in \cite{dt09,dt10}.
Here we only sketch some main ideas underlying the numerical procedure.
For simplicity, we consider the case of linear regression (cf.\
Subsection \ref{Reglin}). The argument is easily extended to other
models discussed in the previous section.

Thus, assume that we have a sample $(\bX_i,Y_i)$, $i=1,\ldots,n$, and a
finite dictionary $\{\phi_j\dvtx\mcX\to\RR\}$ of cardinality $M$. We wish
to compute the expression
\begin{equation}
\tilde\llambda=\frac{\int_{\RR^M} \llambda
\mathrm{e}^{-\beta^{-1}\|\bY-F_\llambda(\bX)\|_2^2}\pi(\mathrm{d}\llambda)}
{\int_{\RR^M}\mathrm{e}^{-\beta^{-1}\|\bY-F_\llambda(\bX)\|_2^2}\pi
(\mathrm{d}\llambda)},
\end{equation}
where $F_\llambda(\bX)=(\fllambda(\bX_1),\ldots,\fllambda(\bX
_n))^\T$
and $\fllambda=\sum_{j=1}^M\lambda_j\phi_j$. A slight modification of
the sparsity prior consists in replacing $\pi$ defined in (\ref{prior})
by
%
\begin{equation}\label{pi-1}
\tilde\pi(\mathrm{d}\llambda)\propto\Biggl(\prod_{j=1}^M
\frac{\mathrm{e}^{-\opi(\alpha\lambda_j)}}
{(\tau^2+\lambda_j^2)^2}\Biggr)\1(\|\llambda\|_1\le R)\,\mathrm{d}\llambda,
\end{equation}
where $\alpha$ is a small parameter and $\opi\dvtx\RR\to\RR$ is the Huber
function: $\opi(t)=t^2\1(|t|\le1)+(2|t|-1)\1(|t|>1)$. Introducing the
product of $e^{-\opi(\alpha\lambda_j)}$ in the definition of the prior
does not affect its capacity to capture sparse objects, in the sense
that the MA aggregate based on the prior~(\ref{pi-1}) can be shown to
satisfy a SOI which is quite similar to that of
Theorem~\ref{ThmSparsity} (cf. \cite{dt09,dt10} where the regression
model with fixed design is treated). On the other hand, this
modification of the sparsity prior makes it possible to rigorously
prove the geometric ergodicity of the Langevin diffusion defined below.

Note that we can equivalently write $\tilde\llambda$ in the form
\begin{equation}\label{lll}
\tilde\llambda=\frac{\int_{\RR^M}\llambda\1(\|\llambda\|_1\le R)
p_V(\llambda)\,\mathrm{d}\llambda} {\int_{\RR^M}\1(\|\llambda\|_1\le R)
p_V(\llambda)\,\mathrm{d}\llambda},
\end{equation}
where $p_V(\llambda)\propto e^{V(\llambda)}$ with
\begin{equation}\label{V}
V(\llambda)=-\beta^{-1}\|\bY-F_\llambda(\bX)\|_2^2-\sum_{j=1}^M
2\{\log(\tau^2+\lambda_j^2)+ \opi(\alpha\lambda_j)\}.
\end{equation}
Consider now the Langevin stochastic differential equation (SDE)
\[
\mathrm{d}\bL_t=\nabla V(\bL_t)\,\mathrm{d}t+\sqrt{2}\,\mathrm{d}\bW_t,\qquad \bL
_0=0, t\ge0,
\]
where $\bW$ stands for an $M$-dimensional Brownian motion. For our
choice of the potential~$V$, this SDE has a unique strong solution. It
can be also shown (cf. \cite{dt09,dt10}) that this choice of $V$
guarantees the geometric ergodicity of the solution, which implies that
its stationary distribution has the density $p_V(\llambda)\propto
\mathrm{e}^{V(\llambda)}$, $\llambda\in\RR^M$. This and (\ref{lll}) suggest the
Langevin Monte Carlo procedure of computation of $ \tilde\llambda$.
Indeed, consider the time averages
\[
\bar\bL_T=\frac1T\int_0^T \bL_t\1(\|\bL_t\|_1\le R)\,\mathrm{d}t,\qquad
S_T=\frac1T\int_0^T \1(\|\bL_t\|_1\le R)\,\mathrm{d}t,\qquad  T\ge0.
\]
According to the above remarks, the ratio of these average values
converges, as $T\to\infty$, to the vector $\tilde\llambda$ that we want
to compute. Note that $\bar\bL_T$ and $S_T$ are one-dimensional
integrals over a finite interval and, therefore, are simpler objects
than $\tilde\llambda$, which is an integral in $M$ dimensions. Still,
one cannot compute $\bar\bL_T$ directly, and some discretization is
needed. A standard way of doing it is to approximate $\bar\bL_T$ and
$S_T$ by the sums
\[
\bar\bL_{T,h}^E=\frac1{[T/h]}\sum_{k=0}^{[T/h]-1}
\bL_k^E\1(\|\bL_k^E\|_1\le R),\qquad
S_{T,h}^E=\frac1{[T/h]}\sum_{k=0}^{[T/h]-1} \1(\|\bL_k^E\|_1\le R),
\]
where $\{\bL_k^E\}$ is the Markov chain defined by the Euler scheme
\[
\bL_{k+1}^E=\bL_{k}^E+h\nabla V(\bL_k^E)+\sqrt{2h}\bW_k,\qquad
\bL^E_0=0,  k=0,1,\ldots,[T/h]-1.
\]
Here $\bW_1$, $\bW_2,\ldots$ are i.i.d. standard Gaussian random
vectors in $\RR^M$, $h>0$ is a step of discretization, and $[x]$ stands
for the integer part of $x\in\RR$. It can be shown that $
\bar\bL_{T,h}^E$ is an accurate approximation of $\bar\bL_T$ for small
$h$. We refer to \cite{dt09,dt10} for further details. The
computational complexity is polynomial in $M$ and $n$. Simulation
results in \cite{dt09,dt10}, as well as the experiments on image
denoising \cite{Jo}, show the fast convergence of the algorithm; it can
be easily realized in dimensions $M$ up to several thousands. They also
demonstrate nice performance of the exponentially weighted aggregate as
compared with the Lasso and other related methods of prediction under
the sparsity scenario.

\begin{appendix}\label{appendix}
\section*{Appendix}\label{sec8}
\setcounter{subsection}{0}
\renewcommand{\theequation}{\arabic{equation}}

\subsection{\texorpdfstring{Proof of Theorem \protect\ref{Th1}}{Proof of Theorem 1}}

First, note that
without loss of generality we can set $\beta=1$. If this is not the
case, it suffices to replace $Q$ and $\ell$ by $\tilde Q=\frac1\beta Q$
and $\tilde\ell=\frac1\beta\ell$, respectively. By Assumption \ref{assq1},
%
\begin{equation}\label{eq2}
\Ex_f[\ell(\hat f_n,f)]=\int_\mcZ\Ex_f[Q(z,\hat
f_n)]\Pf(\mathrm{d}z)-\Delta(f).
\end{equation}
In the last display we have used Fubini's theorem to interchange the
integral and the expectation; this is possible since the integrand is
bounded from below. To get the desired result, one needs now to bound
the first term on the RHS of (\ref{eq2}), which we rewrite as follows
%
\begin{equation}\label{eq3}
\int_\mcZ\Ex_f[Q(z,\hat
f_n)]\Pf(\mathrm{d}z)=-\int_\mcZ\Ex_f[\log(\exp\{
-Q(z,\hat
f_n)\})]\Pf(\mathrm{d}z).
\end{equation}
Recall now that $\hat f_n$ is defined as the average of the functions
$\flambda$ w.r.t. the probability measure $\hat\mu_n$. If we knew that
the mapping $g\mapsto\exp\{-Q(z,g)\}$ is concave on the convex
hull of $\mcF_\Lambda$, we could apply Jensen's inequality to get
\[
\exp\{-Q(z,\hat f_n)\} \ge\int_\Lambda
\exp\{-Q(z,\flambda)\} \hat\mu_n(\mathrm{d}\llambda).
\]
As we see below, this would allow us to get inequality (\ref{oracle})
by a simple application of the convex duality argument. Unfortunately,
the above mentioned concavity property is rather exceptional and
therefore the quantity
\[
S_1(z,\bZ)=\log\biggl(\int_\Lambda\exp\{-Q(z,\flambda)\}
\hat\mu_n(\mathrm{d}\llambda)\biggr)-\log(\exp\{-Q(z,\hat f_n)
\})
\]
is not necessarily a.s. negative. However, we may write
\begin{equation}\label{eq4}
\int_\mcZ\Ex_f\bigl[\log\bigl(\mathrm{e}^{-Q(z,\hat f_n)}\bigr)\bigr]\Pf(\mathrm{d}z)=
\int_\mcZ\Ex_f[S_0(z,\bZ)-S_1(z,\bZ)]\Pf(\mathrm{d}z),
\end{equation}
where
\[
S_0(z,\bZ)=\log\biggl(\int_\Lambda\exp\{-Q(z,\flambda)\}
\hat\mu_n(\mathrm{d}\llambda)\biggr).
\]
By the concavity of the logarithm,
\[
S_0(z,\bZ) \ge\frac1{n+1}\sum_{m=0}^n \log\biggl(\int_\Lambda
\mathrm{e}^{-Q(z,\flambda)} \hat\theta_{m,\llambda}\pi(\mathrm{d}\llambda)\biggr).
\]
Replacing $\hat\theta_{m,\llambda}$ by its explicit expression and
taking the integral of both sides of the last display, we get on the
RHS a telescoping sum. This leads to the inequality
\[
\int_\mcZ\Ex_f[S_0(z,\bZ)]\Pf(\mathrm{d}z) \ge\frac1{n+1}
\int_{\mcZ^{n+1}}\log\biggl(\int_\Lambda \mathrm{e}^{-\sum_{i=1}^{n+1}
Q(z_i,\flambda)} \pi(\mathrm{d}\llambda)\biggr) \Pf^{(n+1)}(\mathrm{d}{\mathbf z}).
\]
By a convex duality argument (cf., e.g., \cite{dz}, page 264, or
\cite{catbook01}, page 160), we get
\begin{eqnarray*}
\log\biggl(\int_\Lambda \mathrm{e}^{-\sum_{i=1}^{n+1} Q(z_i,\flambda)}
\pi(\mathrm{d}\llambda)\biggr) \ge-\sum_{i=1}^{n+1} \int_\Lambda
Q(z_i,\flambda) p(\mathrm{d}\llambda)-\KL(p,\pi),
\end{eqnarray*}
for every $p\in\mcPLambda$. Therefore, integrating w.r.t.
$z_1,\ldots,z_{n+1}$ and using the symmetry, we get
\begin{eqnarray*}
\int_\mcZ\Ex_f[S_0(z,\bZ)]\Pf(\mathrm{d}z)
&\ge&-\int_\mcZ\int_\Lambda Q(z,\flambda) p(\mathrm{d}\llambda) \Pf
(\mathrm{d}z)-\frac{\KL(p,\pi)}{n+1}\\
&=&-\int_\Lambda\ell(\flambda,f) p(\mathrm{d}\llambda)
-\Delta(f)-\frac{\KL(p,\pi)}{n+1}.
\end{eqnarray*}
This and equations (\ref{eq2})--(\ref{eq4}) imply
%
\begin{equation}\label{eq5}
\Ex_f[\ell(\hat f_n,f)]\le\int_\Lambda\ell(\flambda,f)
p(\mathrm{d}\llambda)
+ \frac{\KL(p,\pi)}{n+1}+\int_\mcZ\Ex_f[S_1(z,\bZ)]\Pf(\mathrm{d}z).
\end{equation}
Let us show that the last term on the RHS of (\ref{eq5}) is
nonpositive. Rewrite $ S_1(z,\bZ)$ in the form
\[
S_1(z,\bZ)=\log\int_\Lambda\exp\bigl(-\{Q(z,\flambda
)-Q(z,\hat
f_n)\}\bigr)\hat\mu_n(\mathrm{d}\llambda).
\]
By the Fubini theorem, the concavity of the logarithm and Assumption
\ref{assq2}, we get
\[
\int_\mcZ\Ex_f[S_1(z,\bZ)]\Pf(\mathrm{d}z)\le\Ex_f\biggl[\log\int
_\Lambda
\Psi_1(\flambda,\hat f_n) \hat\mu_n(\mathrm{d}\llambda)\biggr]
\]
(recall that we set $\beta=1$). The concavity of the map
$g\mapsto\Psi_1(g,\hat f_n)$ and Jensen's inequality yield
\[
\int_\Lambda\Psi_1(\flambda,\hat f_n)\hat\mu_n(\mathrm{d}\llambda)\le
\Psi_1\biggl(\int_\Lambda\flambda\hat\mu_n(\mathrm{d}\llambda),\hat
f_n\biggr)=\Psi_\beta(\hat f_n,\hat f_n)=1,
\]
and the desired result follows.

\subsection{Some lemmas}
We now give some technical results needed in
the proofs.

\begin{lemma}\label{lem0}
For every $M\in\mathbb N$ and every $s>M$, the following inequality
holds:
\[
\frac1{(\pi/2)^M}\int_{\{u:\|u\|_1>s\}}
\prod_{j=1}^M\frac{\mathrm{d}u_j}{(1+u_j^2)^2}\le\frac{M}{(s-M)^2}.
\]
\end{lemma}
\begin{pf}
Let $U_1,\ldots,U_M$ be i.i.d. random variables drawn from the scaled
Student $t(3)$ distribution having as density the function $u\mapsto
2/[\pi(1+u^2)^2]$. One easily checks that $\Ex[U_1^2]=1$.
Furthermore, with this notation, we have
\[
\frac1{(\pi/2)^M}\int_{\{u:\|u\|_1>s\}}
\prod_{j=1}^M\frac{\mathrm{d}u_j}{(1+u_j^2)^2}=\Pb\Biggl(\sum_{j=1}^M
|U_j|\ge
s\Biggr).
\]
In view of Chebyshev's inequality, the last probability can be bounded
as follows:
\[
\Pb\Biggl(\sum_{j=1}^M |U_j|\ge
s\Biggr)\le\frac{M\mathbf{E}[U_1^2]}{(s-M\Ex[|U_1|])^2}\le
\frac{M}{(s-M)^2}
\]
and the desired inequality follows.
\end{pf}

\begin{lemma}\label{lem1}
Let the assumptions of Theorem~\ref{ThmSparsity} be satisfied and let
$p_0$ be the probability measure defined by (\ref{eq6}). If $M\ge2$,
then $\int_\Lambda(\lambda_1-\lambda_1^*)^2p_0(\mathrm{d}\llambda)\le4\tau^2$.
\end{lemma}

\begin{pf}
Using the change of variables $u=(\llambda-\llambdastar)/\tau$, we
write
\[
\int_\Lambda(\lambda_1-\lambda_1^*)^2p_0(\mathrm{d}\llambda)=
C_{M}\tau^2\int_{B_1(2M)} u_1^2\Biggl(\prod_{j=1}^M
(1+u_j^2)^{-2}\Biggr)\,\mathrm{d}u
\]
with
\begin{equation}\label{CM}
C_M=\Biggl(\int_{B_1(2M)}\Biggl(\prod_{j=1}^M
(1+u_j^2)^{-2}\Biggr)\,\mathrm{d}u\Biggr)^{-1},
\end{equation}
where $u_j$ are the components of $u$. Extending the integration from
$B_1(2M)$ to $\RR^M$ and using the inequality $\int_\RR
u_1^2(1+u_1^2)^{-2}\,\mathrm{d}u_1\le\pi$, we get
\[
\int_\Lambda(\lambda_1-\lambda_1^*)^2p_0(\mathrm{d}\llambda)\le C_{M}\tau
^2\pi
\biggl(\int_{\RR} (1+t^2)^{-2}\,\mathrm{d}t\biggr)^{M-1}= 2C_{M}\tau^2(\pi/2)^M,
\]
where we used that the primitive of the function $(1+x^2)^{-2}$ is
$\frac12\arctan(x)+\frac{x}{2(1+x^2)}$. To bound $C_M$, we apply
Lemma~\ref{lem0} which yields
\begin{equation}\label{inneq1}
C_M\le(2/\pi)^M(1-1/M)^{-1} \le2(2/\pi)^M,
\end{equation}
for $M\ge2$. Combining these estimates, we get $\int_\Lambda
(\lambda_1-\lambda_1^*)^2p_0(\mathrm{d}\llambda)\le4\tau^2$ and the desired
inequality follows.
\end{pf}

\begin{lemma}\label{lem2}
Let the assumptions of Theorem~\ref{ThmSparsity} be satisfied and let
$p_0$ be the probability measure defined by (\ref{eq6}). Then
$\KL(p_0,\pi)\le4\sum_{j=1}^M \log(1+|\lj^*|/\tau)+1$.
\end{lemma}

\begin{pf}
The definition of $\pi$, $p_0$ and of the Kullback--Leibler divergence
imply that
\begin{eqnarray}\label{eq7}
\KL(p_0,\pi)&=&\int_{B_1(2M\tau)} \log\Biggl\{\tau^{3M}C_M
C_{\tau,R}\prod_{j=1}^M
\frac{(\tau^2+\lj^2)^2}{(\tau^2+(\lj-\lj^*)^2)^2}\Biggr\}
p_0(\mathrm{d}\llambda)\nonumber\\ [-8pt]\\ [-8pt]
&=&\log(\tau^{3M} C_M C_{\tau,R})+2\sum_{j=1}^M\int_{B_1(2M\tau)}
\log\biggl\{ \frac{\tau^2+\lj^2}{\tau^2+(\lj-\lj^*)^2}\biggr\}\nonumber
p_0(\mathrm{d}\llambda).
\end{eqnarray}
We now successively evaluate the terms on the RHS of (\ref{eq7}).
First, in view of (\ref{prior}), we have
\[
C_{\tau,R}=\tau^{-3M}\int_{B_1(R/\tau)} \prod_{j=1}^M
\frac{1}{(1+u_j^2)^2}\,\mathrm{d}u_j \le\tau^{-3M}\biggl(\int_\RR
(1+u_j^2)^{-2}\,\mathrm{d}u_j\biggr)^M=\tau^{-3M}(\pi/2)^M.
\]
This and (\ref{inneq1}) imply $\log(C_M C_{\tau,R})\le\log2\le 1$.

To evaluate the second term on the RHS of (\ref{eq7}), we use that
\begin{eqnarray*}
\frac{\tau^2+\lambda^2_j}{\tau^2+(\lj-\lj^*)^2}&=&1+\frac{2\tau
(\lj-\lj^*)}{\tau^2+(\lj-\lj^*)^2}(\lj^*/\tau)+\frac{{\lj
^*}^2}{\tau^2+(\lj-\lj^*)^2}\\
&\le&1+|\lj^*/\tau|+(\lj^*/\tau)^2\le(1+|\lj^*/\tau|)^2.
\end{eqnarray*}
This entails that the second term on the RHS of (\ref{eq7}) is bounded
from above by $\sum_{j=1}^M 2\log(1+|\lj^*|/\tau)$. Combining these
inequalities, we get the lemma.\vspace*{-3pt}
\end{pf}

\subsection{\texorpdfstring{Proof of Theorem \protect\ref{ThmSparsity}}{Proof of Theorem 2}}\vspace*{-3pt}
\label{sec93}
In view of inequality (\ref{oracle}), we have
\[
\Ex_f[\ell(\hat f_n,f)]\le\int_\Lambda
\ell(\fllambda,f)p(\mathrm{d}\llambda)+\frac{\beta\KL(p,\pi)}{n+1},
\]
for every probability measure $p$. We choose here $p=p_0$ where $p_0$
has the following Lebesgue density:
%
\begin{equation}\label{eq6}
\frac{\mathrm{d}p_0}{\mathrm{d}\llambda}(\llambda)\propto\frac{\mathrm{d}\pi}{\mathrm{d}\llambda}
(\llambda-\llambdastar)\1_{B_1(2M\tau)}(\llambda-\llambdastar).
\end{equation}
Here the sign $\propto$ indicates the proportionality of two functions.
Since $\|\llambdastar\|_1\le R-2M\tau$, the condition
$\llambda-\llambdastar\in B_1(2M\tau)$ implies that $\llambda\in
B_1(R)$ and, therefore, $p_0$ is absolutely continuous w.r.t. the
sparsity prior $\pi$. By Taylor's formula and Assumption \ref{assl}, we have
\[
\ell(\fllambda,f)=L_f(\llambda)\le L_f(\llambdastar)+\nabla
L_f(\llambdastar)^\top(\llambda-\llambdastar)+
(\llambda-\llambdastar)^\top  \mcM(\llambda-\llambdastar)\qquad
\forall\llambda\in\Lambda_0.
\]
Integrating both sides of this inequality w.r.t. $p_0$ and using the
fact that the density of~$\pi_0$ is symmetric about $\llambdastar$ and
invariant under permutation of the components we find
\begin{equation}
\int_\Lambda\ell(\fllambda,f)p_0(d\llambda)\le
L_f(\llambdastar)+\tr(\mcM)\int_\Lambda
(\lambda_1-\lambda_1^*)^2p_0(\mathrm{d}\llambda).
\end{equation}
Combining this inequality with those stated in Lemmas \ref{lem1} and
\ref{lem2}, we get the desired result.\vspace*{-3pt}

\subsection{\texorpdfstring{Proof of Proposition \protect\ref{propdensity}}{Proof of Proposition 4}}\vspace*{-3pt}
Note that Assumption \ref{assq1} obviously holds and Assumption \ref{assl} is fulfilled
with $\mcM$ being the Gram matrix. The diagonal entries of $\mcM$ are
equal to one since $\|\phi_j\|_{\mu,2}=1$, and therefore we have
$\tr(\mcM)=M$.\vadjust{\goodbreak}

It remains to check Assumption \ref{assq2} in order to apply
Theorem~\ref{ThmSparsity}. Introduce the function
\begin{eqnarray*}
\Xi(t)&=&\exp\bigl(-\beta^{-1}\bigl\{
Q\bigl(X_1,g_0+t(g_1-g_0)\bigr)-Q(X_1,\tilde g)\bigr\}\bigr)\\
&=&\exp\bigl[-\beta^{-1}\bigl\{\|g_t\|_{\mu,2}^2-\|\tilde g\|_{\mu,2}^2
+2\bigl(\tilde g(X_1)-g_t(X_1)\bigr)\bigr\}\bigr],\qquad   t\in[0,1],
\end{eqnarray*}
where $g_0$, $g_1$ and $\tilde g$ are functions from the convex set
$\mcFL$, and $g_t=g_0+t(g_1-g_0)\in\mcF$. It is not hard to see that
Assumption \ref{assq2} follows from the fact that the mapping $t\mapsto
\Ex_f[\Xi(t)]$ is concave for any triplet $g_0,g_1, \tilde g \in
\mcFL$. Let us prove now this concavity property. Since the functions
$g_0,g_1, \tilde g$ are uniformly bounded we get that $\Xi(\cdot)$ is
twice continuously differentiable and the differentiation inside the
expectation $\Ex_f[\Xi(t)]$ is legitimate. Therefore,
\begin{eqnarray*}
\frac{\mathrm{d}}{\mathrm{d} t} \Ex_f[\Xi(t)]&=&-2\beta^{-1}\Ex_f\bigl[
\bigl(\langle g_t,h\rangle-h(X_1)\bigr)\Xi(t)\bigr],\\
\frac{\mathrm{d}^2}{\mathrm{d} t^2} \Ex_f[\Xi(t)]&=&-2\beta^{-2}\Ex_f\bigl[\bigl(
\beta\|h\|_2^2-2\{\langle g_t,h\rangle-h(X_1)\}^2 \bigr)
\Xi(t)\bigr],
\end{eqnarray*}
where $h=g_1-g_0$, and
\[
\frac{\beta^2}{2} \frac{\mathrm{d}^2}{\mathrm{d} t^2}
\Ex_f[\Xi(t)]\le
-(\beta\|h\|_2^2-2\langle
g_t,h\rangle^2)\Ex_f[\Xi(t)]+2\Ex_f[\{
h(X_1)^2-2\langle
g_t,h\rangle h(X_1)\} \Xi(t)].
\]
This leads to
\[
\Xi(t)\le\exp[-\beta^{-1}\{\|g_t\|_{\mu,2}^2-\|\tilde
g\|_{\mu,2}^2\}+4 R L/\beta]:=\Xi_1(t)
\]
and
\[
\Ex_f[\Xi(t)]\ge
\exp\Bigl[-\beta^{-1}\Bigl\{\|g_t\|_{\mu,2}^2-\|\tilde
g\|_{\mu,2}^2+4\max_{\mcFL}\Ex_f[|g(X_1)|]\Bigr\}\Bigr]
=\Xi_1(t)\mathrm{e}^{-4R(L+\sqrt{L})/\beta}.
\]
Combining these estimates with inequalities
\[
\Ex[h(X_1)^2]\le L\|h\|_2^2,\qquad |\langle g_t,h\rangle| \le
\|g_t\|_2\|h\|_2\le R\|h\|_2,\qquad \Ex[| \langle g_t,h\rangle
h(X_1)|]\le RL\|h\|_2^2,
\]
we get
\[
\frac{\beta^2}{2} \frac{\mathrm{d}^2}{\mathrm{d}t^2} \Ex_f[\Xi(t)]\le
-\|h\|_2^2\Xi_1(t)\bigl((\beta-2R^2)\mathrm{e}^{-4R(L+\sqrt{L})/\beta}-2L-4R
L\bigr)\le0,
\]
whenever $(\beta-2R^2)\mathrm{e}^{-4R(L+\sqrt{L})/\beta}\ge2L+4R L$. This
proves the concavity of $t\mapsto\Ex_f[\Xi(t)]$, and thus the
proposition.

\subsection{\texorpdfstring{Proof of Proposition \protect\ref{classif2}}{Proof of Proposition 6}}
In view of (\ref{eq5}), for any prior $\pi$ and any $\beta>0$ the MA
aggregate $\hat f_n$ satisfies the inequality
%
\begin{equation}
\hspace*{-5pt}\Ex_f[\ell_\Phi(\hat f_n,f)]\le
\inf_{p\in\mcPLambda}\biggl(\int_{\Lambda}
\ell_\Phi(\flambda,f)p(\mathrm{d}\llambda)+\frac{\beta\KL(p,\pi
)}{n+1}\biggr)+\beta\int_\mcZ\Ex_f[S_1(z,\bZ)]\Pf(\mathrm{d}z)
\end{equation}
with $ S_1(z,\bZ)$ defined by
$S_1(z,\bZ)=\log\int_\llambda\exp(-\beta^{-1}\{
Q(z,\flambda)-Q(z,\hat
f_n)\})\hat\mu_n(\mathrm{d}\llambda)$. Let us introduce the function
$\psi_\llambda(t)=\exp(-t\{Q(z,\flambda)-Q(z,\hat
f_n)\})$. This function is infinitely differentiable, equals
one at the origin and we have $S_1(z,\bZ)=\log\int_\Lambda
\psi_\llambda(\beta^{-1})\hat\mu_n(\mathrm{d}\llambda)$. Using the Taylor
formula, we get
\[
\psi_\llambda(t)\le1+t\psi_\llambda'(0)+\frac{t^2}2
\bigl(Q(z,\flambda)-Q(z,\hat f_n)\bigr)^2\mathrm{e}^{tQ(z,\hat f_n)}\qquad
\forall
t\ge0.
\]
Furthermore, since the hinge loss is convex, the Jensen inequality
yields $\int_\Lambda\psi_\llambda'(0)\hat\mu_n(\mathrm{d}\llambda)\le0$.
Replacing $t$ by $\beta^{-1}$ and using that $Q(z,\hat f_n)\le
1+RL_\phi$, we get the inequalities
\begin{eqnarray*}
S_1(z,\bZ)&=&\log\int_\Lambda
\psi_\llambda(\beta^{-1})\hat\mu_n(\mathrm{d}\llambda)\le
\log\biggl(1+\frac{\mathrm{e}^{(1+RL_\phi)/\beta}}{2\beta^2}\int_\Lambda
\bigl(Q(z,\flambda)-Q(z,\hat f_n)\bigr)^2\hat\mu_n(\mathrm{d}\llambda)\biggr)\\
&\le&\frac{\mathrm{e}^{(1+RL_\phi)/\beta}}{2\beta^2}\int_\Lambda
\bigl(Q(z,\flambda)-Q(z,\hat f_n)\bigr)^2\hat\mu_n(\mathrm{d}\llambda)\le
\frac{2\mathrm{e}^{(1+RL_\phi)/\beta}}{\beta^2}(1+RL_\phi)^2.
\end{eqnarray*}
Thus, we obtain
%
\begin{equation}\label{eq19}
\hspace*{-10pt}\Ex_f[\ell_\Phi(\hat f_n,f)]\le
\inf_{p\in\mcPLambda}\biggl(\int_{\Lambda}
\ell_\Phi(\flambda,f)p(\mathrm{d}\llambda)+\frac{\beta\KL(p,\pi
)}{n+1}\biggr)+
\frac{2(1+RL_\phi)^2\mathrm{e}^{(1+RL_\phi)/\beta}}{\beta},
\end{equation}
which is valid for any prior $\pi$. Note that the term with the infimum
in (\ref{eq19}) coincides with the right hand side of the oracle
inequality of Theorem~\ref{Th1}. Therefore, when the sparsity prior is
used, this term can be bounded from above using Remark~\ref{AssumpL}
with $\bar
L_f(\llambda)=\int_\mcX|\eta(x)|K_\eps(\fllambda(x)-f(x)
)
P_X(\mathrm{d}x)$. Since also $|\eta(x)|\le1$, we get
\begin{eqnarray*}
\Ex_f[\ell(\hat f_n,f)]&\le&\min_{\|\llambdastar\|_1\le R-2M\tau
}\Biggl(
\ell(f_{\llambdastar},f)+\frac{2\beta}{n+1}\Biggl\{\alpha\|
\llambdastar\|_1+\sum_{j=1}^M
\log(1+\tau^{-1}|\lambdastar_j|)\Biggr\}\Biggr)\\
&&{}+\eps+4\tau^2\tr
(\bar\mcM_\eps)+\frac{2(1+RL_\phi)^2\mathrm{e}^{(1+RL_\phi)/\beta}}{\beta},
\end{eqnarray*}
where the entries of the matrix $\bar\mcM_\eps$ are $\eps^{-1}\int
_\mcX
|\eta(x)|\phi_j(x)\phi_{j'}(x)P_X(\mathrm{d}x)$ with $i,j=1,\ldots,M$. Thus,
$\tr(\bar\mcM_\eps)\le L_\phi^2 M\eps^{-1}$, and we get the
result of
the proposition by minimizing the right-hand side of the last display
with respect to $\eps>0$.
\end{appendix}

\section*{Acknowledgements}
The authors acknowledge the financial support by ANR under grant PARCIMONIE.

%

\printhistory

\end{document}